\documentclass[pdflatex,sn-mathphys-num]{sn-jnl}

\usepackage{amsmath,amssymb,amsthm,mathtools,bm}
\usepackage{mathrsfs}
\usepackage{hyperref}
\usepackage{enumitem}
\usepackage{microtype}

\hypersetup{
  colorlinks=true,
  linkcolor=blue,
  citecolor=blue,
  urlcolor=blue
}

\theoremstyle{plain}
\newtheorem{theorem}{Theorem}[section]
\newtheorem{corollary}[theorem]{Corollary}
\newtheorem{proposition}[theorem]{Proposition}
\newtheorem{lemma}[theorem]{Lemma}

\theoremstyle{definition}

\newtheorem{remark}[theorem]{Remark}
\newtheorem{example}[theorem]{Example}

\begin{document}

\title[Article Title]{Parameter-Space Heat Flow, Gaussian Density Ratios, and Sharp Hermite Truncation Rates}
\author[1,2,3]{\fnm{Jae Wan} \sur{Shim}}

\affil[1]{\orgdiv{Extreme Materials Research Center}, \orgname{Korea Institute of Science and Technology}, \orgaddress{\street{5 Hwarang-ro 14-gil, Seongbuk}, \city{Seoul}, \postcode{02792}, \country{Republic of Korea}}}

\affil[2]{\orgdiv{Climate and Environmental Research Institute}, \orgname{Korea Institute of Science and Technology}, \orgaddress{\street{5 Hwarang-ro 14-gil, Seongbuk}, \city{Seoul}, \postcode{02792}, \country{Republic of Korea}}}

\affil[3]{\orgdiv{Division of AI-Robotics, KIST Campus}, \orgname{University of Science and Technology}, \orgaddress{\street{5 Hwarang-ro 14-gil, Seongbuk}, \city{Seoul}, \postcode{02792}, \country{Republic of Korea}}}

\abstract{We reinterpret the classical Hermite generating function as a Gaussian density
ratio: relative to the unit Gaussian reference, it is the density ratio of a
Gaussian with shifted mean and unchanged covariance.  Applying the heat
semigroup in the mean-parameter variable to this generating function produces
the corresponding temperature variation. Thus the heat-semigroup time variable is reinterpreted
as the temperature variation of the Gaussian density ratio.

This parameter-space formulation also gives a parabolic control principle for
Hermite approximation errors.  Since Hermite projections act in the velocity
variable and the heat flow acts in the mean variable, Hermite block energies
and truncation tails are subsolutions of the same parameter-space heat
equation.  This remains useful for heat-evolved non-Gaussian perturbations
where no usable closed coefficient formula is available.

For Gaussian density ratios with general covariance, the Hermite coefficients
satisfy a weighted homogeneity in the mean and covariance-defect parameters.
This yields Ornstein--Uhlenbeck covariance, an exact generating function for
total-degree Hermite block energies, and the sharp geometric Hermite
truncation rate, equal to the square root of the largest absolute covariance
defect.  We also derive precise isotropic block and tail asymptotics and
interpret the estimates for near-Gaussian kinetic distributions.}

\maketitle

\section{Introduction}

Hermite expansions are a natural spectral representation for functions on
Gaussian space \cite{Janson1997,BakryGentilLedoux2014}. In kinetic theory,
they provide a natural way to represent distributions near a Maxwellian
equilibrium relative to a fixed reference Maxwellian
\cite{Grad1949Kinetic,Struchtrup2005,Levermore1996}. A basic question is
therefore how a Maxwellian is represented in Hermite variables with respect to
such a reference, and how fast its total-degree Hermite truncations converge.

For \(\theta>0\) and \(u\in\mathbb R^d\), let
\[
M_{\theta,u}(v)
:=
(2\pi\theta)^{-d/2}
\exp\!\left(
-\frac{|v-u|^2}{2\theta}
\right)
\]
be the Maxwellian with mean velocity \(u\) and temperature parameter
\(\theta\). We fix the unit-temperature Maxwellian
\[
w(v):=M_{1,0}(v)
\]
as the reference state.

The starting point of this paper is the function
\[
\Phi(u;v)
=
\exp\!\left(
u\cdot v-\frac{|u|^2}{2}
\right).
\]
In the usual Hermite theory, \(\Phi\) is the generating function for the
probabilists' Hermite polynomials \cite{DLMF,Grad1949Hermite}. In the present
kinetic interpretation, the same function has a second meaning: it is the
normalized Maxwellian corresponding to a pure mean shift at the reference
temperature. Indeed,
\[
\Phi(u;v)
=
\frac{M_{1,u}(v)}{w(v)}.
\]
Thus the Hermite generating function is already a Maxwellian object.

The key observation of this paper is that temperature variation can be
introduced at the level of this generating function by applying the heat
semigroup in the mean parameter \(u\).  For \(\tau\ge0\),
\[
\frac{M_{1+\tau,u}(v)}{w(v)}
=
\exp\!\left(
\frac{\tau}{2}\Delta_u
\right)
\Phi(u;v).
\]
Thus heat time in the mean-parameter space is identified with the temperature
increment relative to the reference state \(M_{1,0}\).  We write
\[
\Phi_\tau(u;v)
:=
\exp\!\left(
\frac{\tau}{2}\Delta_u
\right)\Phi(u;v)
=
\frac{M_{1+\tau,u}(v)}{w(v)}.
\]

Expanding \(\Phi_\tau\) in Hermite polynomials gives
\[
\frac{M_{1+\tau,u}(v)}{w(v)}
=
\sum_{\alpha\in\mathbb N^d}
\frac{1}{\alpha!}
A_\alpha(\tau,u)\mathrm{He}_\alpha(v),
\qquad
A_\alpha(\tau,u)
=
\exp\!\left(
\frac{\tau}{2}\Delta_u
\right)u^\alpha.
\]
Thus the temperature dependence of the Hermite coefficients is encoded by
heat evolution of monomials in the mean parameter.

We also use the parameter-space heat flow to control Hermite approximation
errors at the level of energies.  Since Hermite projections act in the
velocity variable and the heat semigroup acts in the mean variable, the two
operations commute.  Consequently, for any \(L^2(\gamma_d)\)-valued family
evolving by heat flow in the mean parameter, Hermite block energies and
truncation tails satisfy parabolic subsolution identities in parameter space.
This remains useful for heat-evolved non-Gaussian perturbations, such as
Gaussian ratios multiplied by a smooth cutoff or residual factor, where the
coefficient generating transform is generally not available in a usable closed
form.

The Gaussian part of the theory extends from isotropic heating to
full-covariance density ratios.  Let \(S=S^T\) with \(I+S>0\), and interpret
\(S\) as the covariance defect from the unit Gaussian reference.  Define
\[
K_{u,S}(v)
=
\frac{g_{u,I+S}(v)}{w(v)}.
\]
The Hermite coefficients are encoded by the generating function
\[
\sum_{\alpha\in\mathbb N^d}
\frac{z^\alpha}{\alpha!}A_\alpha(S,u)
=
\exp\!\left(
u\cdot z+\frac12 z^TSz
\right).
\]
This formula exhibits a weighted homogeneity:
\[
A_\alpha(\lambda^2S,\lambda u)
=
\lambda^{|\alpha|}A_\alpha(S,u).
\]
Thus the Hermite degree \(|\alpha|\) is identified with the weighted degree of
the Gaussian parameters, where \(u\) has weight \(1\) and \(S\) has weight
\(2\).

This weighted homogeneity is the organizing principle of the paper.  It implies
that the Ornstein--Uhlenbeck semigroup in the velocity variable corresponds,
within the Gaussian ratio family, to contraction of the Gaussian parameters.
We use the standard Gaussian-space normalization of the Ornstein--Uhlenbeck
semigroup \cite{Janson1997,BakryGentilLedoux2014}:
\[
T_tK_{u,S}
=
K_{e^{-t}u,e^{-2t}S}.
\]
It also gives an exact identity for the Hermite block energies.  If \(\Pi_m\)
denotes the projection onto the \(m\)-th total-degree Hermite chaos, then
\[
\sum_{m=0}^{\infty}
t^{2m}
\|\Pi_mK_{u,S}\|_{L^2(\gamma_d)}^2
=
\|K_{tu,t^2S}\|_{L^2(\gamma_d)}^2.
\]
Using the exact \(L^2\) norm of Gaussian density ratios, this becomes
\[
\sum_{m=0}^{\infty}
t^{2m}
\|\Pi_mK_{u,S}\|_{L^2(\gamma_d)}^2
=
\det(I-t^4S^2)^{-1/2}
\exp\!\left(
t^2u^T(I-t^2S)^{-1}u
\right).
\]

The singularities of this generating function determine the sharp Hermite
truncation rate, in the standard sense of coefficient asymptotics and singularity
analysis \cite{FlajoletSedgewick2009}.  In particular, if
\[
0<\|S\|_{\mathrm{op}}<1,
\]
then
\[
\limsup_{M\to\infty}
\|K_{u,S}-\Pi_{\le M}K_{u,S}\|_{L^2(\gamma_d)}^{1/M}
=
\|S\|_{\mathrm{op}}^{1/2}.
\]
Thus the sharp \(M\)-th root rate of fixed-reference Hermite approximation is
determined exactly by the square root of the operator norm of the covariance
defect.

In the isotropic heating case, the sharp rate becomes the scalar heating
parameter.  There the block-energy generating function reduces to a
Laguerre-type generating function, which yields precise block and tail
asymptotics through classical Plancherel--Rotach asymptotics
\cite{Szego1975,DLMF}.

The final part of the paper interprets these estimates for near-Gaussian
kinetic distributions, in the spirit of Hermite and moment approximations in
kinetic theory \cite{Grad1949Kinetic,Levermore1996,Struchtrup2005}.  If a
normalized kinetic density admits a decomposition
\[
g=\rho K_{u,S}+h,
\]
then the Hermite truncation error separates into a Gaussian-core contribution
and a non-Gaussian residual contribution.  When the residual is spectrally
negligible relative to the Gaussian core, the full approximation error has the
same sharp rate as the Gaussian core.  In the isotropic heating case, the
precise asymptotics give the leading-order Hermite spectral tail.

The paper is organized as follows.  The next section introduces the Hermite
notation, proves the parameter-space heat-flow representation of the normalized
Maxwellian ratio, and derives the parabolic control principle for Hermite block
energies and truncation tails.  We then record the isotropic prototype and the
Ornstein--Uhlenbeck covariance.  The following sections treat full-covariance
Gaussian density ratios, prove the exact \(L^2\) norm formula, and derive the
exact block-energy generating function.  We then prove the sharp Hermite root
rate and compare it with standard Hermite--Sobolev and analytic-vector
estimates.  The final sections give precise isotropic block and tail
asymptotics and apply the estimates to kinetic Hermite approximation.

\section{Setup and notation}

Let $d\ge 1$, and let $u,v\in\mathbb R^d$. For a multi-index
$\alpha=(\alpha_1,\dots,\alpha_d)\in\mathbb N^d$, write
\[
|\alpha|:=\alpha_1+\cdots+\alpha_d,
\qquad
\alpha!:=\alpha_1!\cdots \alpha_d!,
\qquad
u^\alpha:=u_1^{\alpha_1}\cdots u_d^{\alpha_d}.
\]
Let
\[
\mathrm{He}_\alpha(v):=\prod_{j=1}^d \mathrm{He}_{\alpha_j}(v_j),
\]
where $\mathrm{He}_{\alpha_j}$ denotes the one-dimensional probabilists' Hermite polynomial.

The probabilists' Hermite polynomials are characterized by the generating
function \cite{DLMF,Grad1949Hermite}
\[
\Phi(u;v):=\exp\!\left(u\cdot v-\frac{|u|^2}{2}\right)
=
\sum_{\alpha\in\mathbb N^d}
\frac{u^\alpha}{\alpha!}\,\mathrm{He}_\alpha(v).
\]

For \(\tau\ge0\), define the temperature-extended Hermite generating function
\[
\Phi_\tau(u;v)
:=
\exp\!\left(\frac{\tau}{2}\Delta_u\right)\Phi(u;v).
\]
For notational convenience, we also write
\[
U(\tau,u;v):=\Phi_\tau(u;v).
\]
Define
\[
A_\alpha(\tau,u)
:=
\exp\!\left(\frac{\tau}{2}\Delta_u\right)u^\alpha.
\]
Then
\[
\Phi_\tau(u;v)
=
U(\tau,u;v)
=
\sum_{\alpha\in\mathbb N^d}
\frac{1}{\alpha!}A_\alpha(\tau,u)\,\mathrm{He}_\alpha(v).
\]
For each \(M\ge0\), define the total-degree truncations
\[
\Phi_{\le M}(u;v)
:=
\sum_{|\alpha|\le M}
\frac{u^\alpha}{\alpha!}\,\mathrm{He}_\alpha(v),
\]
and
\[
U_{\le M}(\tau,u;v)
:=
\sum_{|\alpha|\le M}
\frac{1}{\alpha!}\,
A_\alpha(\tau,u)\,\mathrm{He}_\alpha(v).
\]
We work on the Gaussian space
\[
(\mathbb R^d,\gamma_d),
\qquad
\gamma_d(dv):=(2\pi)^{-d/2}e^{-|v|^2/2}\,dv.
\]
For $(u,\sigma)\in\mathbb R^d\times\mathbb R$, define
\[
K_{u,\sigma}(v):=U(\sigma^2,u;v).
\]

For each $m\ge0$, let
\[
\mathcal H_m:=\operatorname{span}\{\mathrm{He}_\alpha:\ |\alpha|=m\}
\subset L^2(\gamma_d),
\]
and let
\[
\Pi_m:L^2(\gamma_d)\to \mathcal H_m
\]
denote the orthogonal projection. We also write
\[
\Pi_{\le M}:=\sum_{m=0}^M \Pi_m.
\]
Since
\[
\left\{\frac{\mathrm{He}_\alpha}{\sqrt{\alpha!}}\right\}_{\alpha\in\mathbb N^d}
\]
is an orthonormal basis of $L^2(\gamma_d)$ \cite{Janson1997,BakryGentilLedoux2014},
the spaces $\mathcal H_m$ identify the total-degree Hermite decomposition in
the velocity variable $v$.

\subsection{Heat-flow identification of the normalized Gaussian density family}

For \(\theta>0\) and \(u\in\mathbb R^d\), define
\[
M_{\theta,u}(v)
:=
(2\pi\theta)^{-d/2}
\exp\!\left(-\frac{|v-u|^2}{2\theta}\right),
\qquad v\in\mathbb R^d.
\]
We use the unit-temperature reference
\[
w(v):=M_{1,0}(v)=(2\pi)^{-d/2}e^{-|v|^2/2}.
\]
For \(\tau\ge0\), the normalized Maxwellian ratio at temperature
\(\theta=1+\tau\) is
\[
\frac{M_{1+\tau,u}(v)}{w(v)}.
\]

\begin{proposition}[Heat-kernel representation and closed form]\label{prop:explicit_heat_evolution_Hermite_generator}
For the temperature-extended generating function
\(U(\tau,u;v)=\Phi_\tau(u;v)\), the following heat-kernel representation holds.
For \(\tau>0\),
\begin{equation}
U(\tau,u;v)
=
\int_{\mathbb R^d}
\frac{1}{(2\pi\tau)^{d/2}}
\exp\!\left(-\frac{|u-z|^2}{2\tau}\right)
\Phi(z;v)\,dz.
\label{eq:U_heat_kernel_representation}
\end{equation}
Moreover, this integral is explicitly equal to
\begin{equation}
U(\tau,u;v)
=
(1+\tau)^{-d/2}
\exp\!\left(
\frac{\tau |v|^2+2u\cdot v-|u|^2}{2(1+\tau)}
\right).
\label{eq:U_closed_form}
\end{equation}
In particular,
\[
U(0,u;v)=\Phi(u;v)
\]
by taking \(\tau\downarrow0\).
\end{proposition}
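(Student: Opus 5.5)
The plan has two parts. First, identify the formal operator $\exp(\frac{\tau}{2}\Delta_u)$ applied to $\Phi(\cdot;v)$ with convolution against the Gaussian heat kernel. Second, evaluate that convolution in closed form by completing the square.

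For the first part, fix $v$. The map $z\mapsto\Phi(z;v)=e^{z\cdot v-|z|^2/2}$ is smooth and bounded by $e^{|v|^2/2}$, so it is a legitimate initial datum for the heat equation $\partial_\tau U=\frac12\Delta_u U$. The heat semigroup $e^{\frac{\tau}{2}\Delta_u}$ acting on bounded continuous functions is, by definition and standard theory, convolution with the kernel $(2\pi\tau)^{-d/2}e^{-|u-z|^2/(2\tau)}$. This gives \eqref{eq:U_heat_kernel_representation}.

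The definition of $U$ in the setup is also consistent with the series definition via $A_\alpha(\tau,u)=e^{\frac{\tau}{2}\Delta_u}u^\alpha$. Heat evolution of a polynomial is the finite series $\sum_k(\tau/2)^k\Delta^k u^\alpha/k!$, and this coincides with the Gaussian convolution of $u^\alpha$, since Gaussian moments reproduce exactly this series. Summing $\frac{1}{\alpha!}A_\alpha\mathrm{He}_\alpha(v)$ and interchanging sum and integral identifies the series with the integral. The interchange is justified by dominated convergence, using $\sum_\alpha|z^\alpha||\mathrm{He}_\alpha(v)|/\alpha!\le C_v e^{c|z|}$, which is integrable against the Gaussian kernel. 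This justification is the only mildly delicate step, and I would handle it by a crude exponential bound on $|\mathrm{He}_\alpha(v)|$ for fixed $v$.

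For the closed form, I would write the exponent of the integrand as
\[
-\frac{|u-z|^2}{2\tau}+z\cdot v-\frac{|z|^2}{2}
=-\frac{1+\tau}{2\tau}|z|^2+z\cdot\Big(\frac{u}{\tau}+v\Big)-\frac{|u|^2}{2\tau}.
\]
Completing the square in $z$ turns this into a Gaussian in $z$ of variance $\tau/(1+\tau)$ per coordinate. Integrating gives the prefactor $(2\pi\tau)^{-d/2}(2\pi\tau/(1+\tau))^{d/2}=(1+\tau)^{-d/2}$. The remaining constant is
\[
\frac{\tau}{2(1+\tau)}\Big|\frac{u}{\tau}+v\Big|^2-\frac{|u|^2}{2\tau}
=\frac{|u|^2/\tau+2u\cdot v+\tau|v|^2}{2(1+\tau)}-\frac{|u|^2}{2\tau}.
\]
Combining the $|u|^2$ terms gives $\frac{|u|^2}{2(1+\tau)}\big(\frac1\tau-\frac{1+\tau}{\tau}\big)=-\frac{|u|^2}{2(1+\tau)}$, which yields \eqref{eq:U_closed_form}.

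Finally, setting $\tau\downarrow0$ in \eqref{eq:U_closed_form} gives $e^{u\cdot v-|u|^2/2}=\Phi(u;v)$ by continuity. This also agrees with the strong continuity of the heat semigroup at $\tau=0$. As a sanity check, \eqref{eq:U_closed_form} equals $M_{1+\tau,u}(v)/w(v)$, which follows by direct expansion of $-|v-u|^2/(2(1+\tau))+|v|^2/2$.
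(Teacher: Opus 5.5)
Your proof of the proposition as stated is correct and is essentially the paper's. The heat-kernel representation is the standard convolution formula for $e^{\frac\tau2\Delta_u}$ applied to the bounded datum $z\mapsto\Phi(z;v)$, and the closed form is a Gaussian integral. You evaluate that integral by completing the square in $z$. The paper instead writes $Z=u+\sqrt\tau\,Y$ and uses $\mathbb E\exp(a\cdot Y-\frac\tau2|Y|^2)=(1+\tau)^{-d/2}\exp(|a|^2/(2(1+\tau)))$ with $a=\sqrt\tau(v-u)$. This is the same computation, and your algebra checks out.

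Your side paragraph identifying the integral with the series $\sum_\alpha\frac1{\alpha!}A_\alpha(\tau,u)\mathrm{He}_\alpha(v)$ is, however, wrong as written. There is no bound $\sum_\alpha|z^\alpha||\mathrm{He}_\alpha(v)|/\alpha!\le C_ve^{c|z|}$, and no exponential bound on $|\mathrm{He}_\alpha(v)|$. At $v=0$ one has $|\mathrm{He}_{2k}(0)|=(2k-1)!!$, so the majorant equals $e^{|z|^2/2}$ exactly. In general, replacing the coefficients of the generating function by their absolute values gives only $e^{|v||z|+|z|^2/2}$. This is integrable against $e^{-|u-z|^2/(2\tau)}$ only when $\tau<1$, so dominated convergence justifies the interchange only in that range.

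The restriction is genuine. For $d=1$ and $u=v=0$, the terms of the series are $(-\tau)^k\binom{2k}{k}4^{-k}\approx(-\tau)^k/\sqrt{\pi k}$. These sum to $(1+\tau)^{-1/2}$ for $\tau<1$, but they do not tend to zero for $\tau>1$, so the series diverges there.

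The proposition concerns only $U=e^{\frac\tau2\Delta_u}\Phi$ defined through the semigroup. You should therefore drop this paragraph or restrict it to $0\le\tau<1$; the rest of your proof is unaffected.
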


\begin{proof}
The heat-kernel representation follows directly from the standard heat
semigroup formula in the \(u\)-variable \cite{BakryGentilLedoux2014}:
\[
e^{\frac{\tau}{2}\Delta_u}f(u)
=
\int_{\mathbb R^d}
(2\pi\tau)^{-d/2}
\exp\!\left(-\frac{|u-z|^2}{2\tau}\right)f(z)\,dz,
\qquad \tau>0.
\]
Applying this to \(f(z)=\Phi(z;v)\) gives
\eqref{eq:U_heat_kernel_representation}.

It remains to compute the Gaussian integral. If \(Z\sim N(u,\tau I)\), then
\[
U(\tau,u;v)
=
\mathbb E\exp\!\left(Z\cdot v-\frac{|Z|^2}{2}\right).
\]
Writing \(Z=u+\sqrt{\tau}\,Y\), \(Y\sim N(0,I)\), we get
\[
Z\cdot v-\frac{|Z|^2}{2}
=
u\cdot v-\frac{|u|^2}{2}
+
\sqrt{\tau}\,Y\cdot(v-u)
-
\frac{\tau}{2}|Y|^2.
\]
Hence
\[
U(\tau,u;v)
=
\exp\!\left(u\cdot v-\frac{|u|^2}{2}\right)
\mathbb E
\exp\!\left(
\sqrt{\tau}\,Y\cdot(v-u)
-\frac{\tau}{2}|Y|^2
\right).
\]
Using the elementary Gaussian identity
\[
\mathbb E
\exp\!\left(a\cdot Y-\frac{\tau}{2}|Y|^2\right)
=
(1+\tau)^{-d/2}
\exp\!\left(\frac{|a|^2}{2(1+\tau)}\right),
\qquad Y\sim N(0,I),
\]
with \(a=\sqrt{\tau}(v-u)\), we obtain
\[
U(\tau,u;v)
=
(1+\tau)^{-d/2}
\exp\!\left(
u\cdot v-\frac{|u|^2}{2}
+
\frac{\tau|v-u|^2}{2(1+\tau)}
\right).
\]
Finally,
\[
u\cdot v-\frac{|u|^2}{2}
+
\frac{\tau|v-u|^2}{2(1+\tau)}
=
\frac{\tau |v|^2+2u\cdot v-|u|^2}{2(1+\tau)}.
\]
This proves \eqref{eq:U_closed_form}.
\end{proof}

\begin{remark}[Normalized Maxwellian ratio]
The proposition identifies
\[
U(\tau,u;\cdot)=\frac{M_{1+\tau,u}}{w}.
\]
Hence \(A_\alpha(\tau,u)\) are precisely the Hermite coefficients of the
normalized Maxwellian ratio.  The \(L^2(\gamma_d)\) integrability threshold
and the exact norm are computed below in the full-covariance setting.
\end{remark}

\subsection{Parabolic control of Hermite block energies and tails}
\label{subsec:parabolic_control_hermite_tails}

We next isolate the part of the heat-flow argument that does not depend on the
explicit Gaussian coefficient formula.  The result applies to any
\(L^2(\gamma_d)\)-valued heat flow in the mean-parameter variable and gives
parabolic subsolution identities for Hermite block energies and truncation
tails.  Let
\[
F_\tau(u;\cdot)\in L^2(\gamma_d),
\qquad \tau\ge0,\quad u\in\mathbb R^d,
\]
be sufficiently smooth in \((\tau,u)\) and suppose that
\[
\partial_\tau F_\tau
=
\frac12\Delta_uF_\tau
\]
as an \(L^2(\gamma_d)\)-valued heat equation.  For each \(m\ge0\), define
\[
E_m(\tau,u)
:=
\|\Pi_mF_\tau(u;\cdot)\|_{L^2(\gamma_d)}^2,
\]
and for each \(M\ge0\), define the truncation-tail energy
\[
T_M(\tau,u)
:=
\|(I-\Pi_{\le M})F_\tau(u;\cdot)\|_{L^2(\gamma_d)}^2.
\]

\begin{theorem}[Parabolic subsolution identities for Hermite energies]
\label{thm:parabolic_control_hermite_block_energies}
Let \(F_\tau\) solve
\[
\partial_\tau F_\tau
=
\frac12\Delta_uF_\tau
\]
in the sense above.  Then each Hermite block energy satisfies
\begin{equation}
\left(
\partial_\tau-\frac12\Delta_u
\right)
E_m(\tau,u)
=
-
\|\nabla_u\Pi_mF_\tau(u;\cdot)\|_{L^2(\gamma_d;\mathbb R^d)}^2
\le0.
\label{eq:block_energy_subsolution}
\end{equation}
Similarly, each Hermite truncation-tail energy satisfies
\begin{equation}
\left(
\partial_\tau-\frac12\Delta_u
\right)
T_M(\tau,u)
=
-
\|\nabla_u(I-\Pi_{\le M})F_\tau(u;\cdot)\|_{L^2(\gamma_d;\mathbb R^d)}^2
\le0.
\label{eq:tail_energy_subsolution}
\end{equation}
If, in addition,
\[
F_\tau=e^{(\tau/2)\Delta_u}F_0
\]
and the heat-kernel representation is justified, then
\[
E_m(\tau,u)
\le
e^{(\tau/2)\Delta_u}E_m(0,u),
\qquad
T_M(\tau,u)
\le
e^{(\tau/2)\Delta_u}T_M(0,u).
\]
\end{theorem}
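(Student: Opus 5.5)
The plan is to use that the projections \(\Pi_m\) and \(I-\Pi_{\le M}\) act only in the velocity variable \(v\), hence are bounded linear operators on \(L^2(\gamma_d)\) that do not depend on \((\tau,u)\). They therefore commute with \(\partial_\tau\), \(\partial_{u_j}\) and \(\Delta_u\) applied to \(L^2(\gamma_d)\)-valued functions. Write \(P\) for either \(\Pi_m\) or \(I-\Pi_{\le M}\), and set \(G_\tau(u):=PF_\tau(u;\cdot)\). Since the derivatives may be interchanged with the fixed bounded operator \(P\) (difference quotients converge in \(L^2(\gamma_d)\), and \(P\) is continuous), \(G\) again solves \(\partial_\tau G=\frac12\Delta_uG\) as an \(L^2(\gamma_d)\)-valued heat equation. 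Both identities then reduce to one computation for \(N(\tau,u):=\|G_\tau(u)\|^2_{L^2(\gamma_d)}\).

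The key step is the standard Bochner-type identity for the squared norm of a Hilbert-space-valued solution. Using the inner product \(\langle\cdot,\cdot\rangle\) of \(L^2(\gamma_d)\), with real-valued functions, the first computation is
\[
\partial_\tau N=2\langle \partial_\tau G,G\rangle=\langle \Delta_uG,G\rangle .
\]
The second computation, differentiating twice in each \(u_j\), gives
\[
\tfrac12\Delta_uN=\langle\Delta_uG,G\rangle+\sum_{j=1}^d\|\partial_{u_j}G\|^2 .
\]
Subtracting yields
\[
\left(\partial_\tau-\tfrac12\Delta_u\right)N=-\|\nabla_uG\|^2_{L^2(\gamma_d;\mathbb R^d)}\le0 .
\]
Since \(\nabla_uG=\nabla_u PF_\tau\), this is exactly \eqref{eq:block_energy_subsolution} for \(P=\Pi_m\) and \eqref{eq:tail_energy_subsolution} for \(P=I-\Pi_{\le M}\). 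The "sufficiently smooth" hypothesis is interpreted as \(F\in C^1\) in \(\tau\) and \(C^2\) in \(u\) with values in \(L^2(\gamma_d)\). That is all that is needed to differentiate the inner products by the product rule.

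For the comparison inequality, I would avoid a maximum principle, which would require growth control at infinity in \(u\). Instead I would argue directly from the heat-kernel representation, as in Proposition~\ref{prop:explicit_heat_evolution_Hermite_generator}. If \(F_\tau=e^{(\tau/2)\Delta_u}F_0\), then by linearity and boundedness of \(P\) (moving \(P\) inside the Bochner integral),
\[
PF_\tau(u)=\int p_\tau(u-z)\,PF_0(z)\,dz,
\qquad
p_\tau(x):=(2\pi\tau)^{-d/2}e^{-|x|^2/(2\tau)} .
\]
Minkowski's integral inequality, followed by Cauchy--Schwarz (or Jensen) against the probability measure \(p_\tau(u-z)\,dz\), then gives
\[
\|PF_\tau(u)\|^2\le\Big(\int p_\tau(u-z)\|PF_0(z)\|\,dz\Big)^2\le\int p_\tau(u-z)\|PF_0(z)\|^2\,dz .
\]
The right-hand side is \(e^{(\tau/2)\Delta_u}\) applied to \(E_m(0,\cdot)\), respectively \(T_M(0,\cdot)\). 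This proves both inequalities.

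The main obstacle is justification rather than algebra. One needs the Bochner integral to converge, that is, \(\int p_\tau(u-z)\|F_0(z)\|\,dz<\infty\), and the right-hand heat integral to be finite. This is precisely what "the heat-kernel representation is justified" is assumed to supply, so I would state it as the standing hypothesis. For the Gaussian family, the growth of \(\|\Phi(z;\cdot)\|^2=e^{|z|^2}\) restricts \(\tau\) to a range in which the Gaussian integral converges.
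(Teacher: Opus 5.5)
Your proposal is correct and follows the paper's proof. The paper also commutes the $v$-projection with the $u$-heat operator and subtracts the identities for $\partial_\tau\|PF_\tau\|^2$ and $\tfrac12\Delta_u\|PF_\tau\|^2$; for the comparison bounds you take the heat-kernel-plus-Jensen route, which the paper names as an equivalent alternative to the parabolic comparison principle, and you additionally make explicit the Bochner-integrability condition that the paper leaves inside ``the heat-kernel representation is justified.''
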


\begin{proof}
The Hermite projection \(\Pi_m\) acts only on the velocity variable \(v\),
whereas \(\Delta_u\) acts only on the mean parameter \(u\).  Hence these
operators commute, and
\[
\partial_\tau\Pi_mF_\tau
=
\frac12\Delta_u\Pi_mF_\tau.
\]
Therefore
\[
\partial_\tau E_m
=
2\left\langle
\Pi_mF_\tau,\partial_\tau\Pi_mF_\tau
\right\rangle_{L^2(\gamma_d)}
=
\left\langle
\Pi_mF_\tau,\Delta_u\Pi_mF_\tau
\right\rangle_{L^2(\gamma_d)}.
\]
On the other hand,
\[
\frac12\Delta_uE_m
=
\left\langle
\Pi_mF_\tau,\Delta_u\Pi_mF_\tau
\right\rangle_{L^2(\gamma_d)}
+
\|\nabla_u\Pi_mF_\tau\|_{L^2(\gamma_d;\mathbb R^d)}^2.
\]
Subtracting the two identities gives
\[
\left(
\partial_\tau-\frac12\Delta_u
\right)
E_m
=
-
\|\nabla_u\Pi_mF_\tau\|_{L^2(\gamma_d;\mathbb R^d)}^2.
\]
This proves \eqref{eq:block_energy_subsolution}.  The proof of
\eqref{eq:tail_energy_subsolution} is identical, replacing \(\Pi_m\) by
\(I-\Pi_{\le M}\).  If \(F_\tau=e^{(\tau/2)\Delta_u}F_0\), the comparison
estimates above follow from the parabolic comparison principle under the usual
growth assumptions.  Equivalently, using the heat-kernel representation and
Jensen's inequality gives the same bounds directly.
\end{proof}

\begin{remark}[Why this goes beyond the closed Gaussian calculation]
For the Gaussian ratio \(F_\tau(u;v)=U(\tau,u;v)\), the coefficients and block
energies can be computed explicitly.  The preceding theorem is different in
nature: it uses only the \(L^2(\gamma_d)\)-valued heat equation in the
parameter variable.  Thus the block and tail estimates remain meaningful for heat-evolved families
for which the coefficient generating transform is not available in a usable
closed form.
\end{remark}

\begin{example}[A cutoff heat-flow family]
\label{ex:smooth_cutoff_heat_flow}
Let \(\chi\in C_c^\infty(\mathbb R^d)\), and define
\[
F_\tau^\chi(u;v)
:=
\chi(v)U(\tau,u;v)
=
\chi(v)\frac{M_{1+\tau,u}(v)}{w(v)}.
\]
Since \(\chi\) is independent of \(u\), this family still satisfies
\[
\partial_\tau F_\tau^\chi
=
\frac12\Delta_uF_\tau^\chi.
\]
Therefore the Hermite block energies
\[
E_m^\chi(\tau,u)
:=
\|\Pi_mF_\tau^\chi(u;\cdot)\|_{L^2(\gamma_d)}^2
\]
and the truncation tails
\[
T_M^\chi(\tau,u)
:=
\|(I-\Pi_{\le M})F_\tau^\chi(u;\cdot)\|_{L^2(\gamma_d)}^2
\]
satisfy the parabolic subsolution identities
\[
\left(
\partial_\tau-\frac12\Delta_u
\right)
E_m^\chi(\tau,u)
\le0,
\qquad
\left(
\partial_\tau-\frac12\Delta_u
\right)
T_M^\chi(\tau,u)
\le0.
\]

On the other hand, the coefficient generating transform is no longer a simple
Gaussian exponential.  Indeed, for real \(z\),
\[
\begin{aligned}
G_\chi(\tau,u;z)
&:=
\int_{\mathbb R^d}
e^{z\cdot v-(z\cdot z)/2}
F_\tau^\chi(u;v)\,d\gamma_d(v) \\
&=
e^{u\cdot z+\tau(z\cdot z)/2}
\int_{\mathbb R^d}
\chi(y)\,
M_{1+\tau,\,u+(1+\tau)z}(y)\,dy .
\end{aligned}
\]
For a general cutoff \(\chi\), the last factor is the heat transform of
\(\chi\) evaluated at a shifted mean, and it need not reduce to an elementary
Gaussian exponential or determinant expression.  As a coefficient transform,
the identity may then be read formally or by analytic continuation where
appropriate.  Thus the coefficient-generating
route generally loses the explicit closed form available in the pure Gaussian
case, while the parameter-space heat-flow identities for block energies and
truncation tails remain exact.
\end{example}

\section{Isotropic prototype: Hermite blocks}
The isotropic case serves as a prototype for the full-covariance theory.  Here
the covariance defect is \(S=\sigma^2I\), and the corresponding density ratio is
\[
K_{u,\sigma}=U(\sigma^2,u;\cdot).
\]
We record the associated Hermite blocks and total-degree truncations; the
Ornstein--Uhlenbeck covariance and sharp truncation rate will follow from the
full-covariance theory below.

Assume \(|\sigma|<1\), so that \(K_{u,\sigma}\in L^2(\gamma_d)\). Since
\[
K_{u,\sigma}
=
\sum_{\alpha\in\mathbb N^d}
\frac{1}{\alpha!}
A_\alpha(\sigma^2,u)\mathrm{He}_\alpha
\qquad\text{in }L^2(\gamma_d),
\]
the \(m\)-th Hermite block is
\[
\Pi_mK_{u,\sigma}
=
\sum_{|\alpha|=m}
\frac{1}{\alpha!}
A_\alpha(\sigma^2,u)\mathrm{He}_\alpha.
\]
Consequently, for \(M\ge0\),
\[
\Pi_{\le M}K_{u,\sigma}
=
\sum_{|\alpha|\le M}
\frac{1}{\alpha!}
A_\alpha(\sigma^2,u)\mathrm{He}_\alpha
=
U_{\le M}(\sigma^2,u;\cdot).
\]

\section{Gaussian density ratios with full covariance}

Throughout this section, \(d\gamma_d=w\,dv\) is the unit Gaussian reference
measure fixed above.  We use standard notation for Gaussian measures and
Gaussian density ratios \cite{Bogachev1998,Janson1997}.

For \(u\in\mathbb R^d\) and a real symmetric matrix \(S\in\mathbb R^{d\times d}\)
such that
\[
I+S>0,
\]
we define
\[
K_{u,S}(v)
:=
\frac{g_{u,I+S}(v)}{w(v)},
\]
where
\[
g_{u,\Sigma}(v)
:=
(2\pi)^{-d/2}\det(\Sigma)^{-1/2}
\exp\!\left(
-\frac12(v-u)^T\Sigma^{-1}(v-u)
\right)
\]
denotes the Gaussian density with mean \(u\) and covariance matrix
\(\Sigma>0\). Equivalently,
\[
K_{u,S}(v)
=
\det(I+S)^{-1/2}
\exp\!\left(
-\frac12(v-u)^T(I+S)^{-1}(v-u)
+\frac12|v|^2
\right).
\]

The isotropic shorthand is recovered by taking \(S=\sigma^2I\):
\[
K_{u,\sigma}
=
K_{u,\sigma^2I}
=
\frac{M_{1+\sigma^2,u}}{w}
=
U(\sigma^2,u;\cdot).
\]
Thus \(K_{u,\sigma}\) denotes the scalar isotropic family, while \(K_{u,S}\)
denotes the full-covariance family.

\subsection{Coefficient notation}

For the block-energy calculation below, we record the Hermite coefficients of
the full-covariance Gaussian ratio.  Define \(A_\alpha(S,u)\) by
\[
\sum_{\alpha\in\mathbb N^d}
\frac{z^\alpha}{\alpha!}A_\alpha(S,u)
=
\exp\!\left(
u\cdot z+\frac12z^TSz
\right).
\]
If \(K_{u,S}\in L^2(\gamma_d)\), then
\[
\langle K_{u,S},\mathrm{He}_\alpha\rangle_{L^2(\gamma_d)}
=
A_\alpha(S,u),
\]
and therefore
\[
K_{u,S}
=
\sum_{\alpha\in\mathbb N^d}
\frac{1}{\alpha!}
A_\alpha(S,u)\mathrm{He}_\alpha
\qquad\text{in }L^2(\gamma_d).
\]
Indeed, if \(X\sim N(u,I+S)\), then
\[
K_{u,S}(v)\,d\gamma_d(v)=g_{u,I+S}(v)\,dv,
\]
so
\[
\langle K_{u,S},\mathrm{He}_\alpha\rangle_{L^2(\gamma_d)}
=
\mathbb E[\mathrm{He}_\alpha(X)].
\]
Taking expectations in the Hermite generating function gives
\[
\sum_{\alpha\in\mathbb N^d}
\frac{z^\alpha}{\alpha!}
\langle K_{u,S},\mathrm{He}_\alpha\rangle_{L^2(\gamma_d)}
=
\mathbb E\exp\!\left(z\cdot X-\frac{|z|^2}{2}\right)
=
\exp\!\left(
u\cdot z+\frac12z^TSz
\right),
\]
and comparing coefficients gives the claim.

The same generating function gives the weighted homogeneity
\begin{equation}
A_\alpha(\lambda^2S,\lambda u)
=
\lambda^{|\alpha|}
A_\alpha(S,u),
\qquad \lambda\in\mathbb R.
\label{eq:anisotropic_weighted_homogeneity}
\end{equation}
Thus the Hermite degree \(|\alpha|\) agrees with the weighted homogeneous
degree in the Gaussian parameters \((u,S)\), where \(u\) has weight \(1\)
and \(S\) has weight \(2\).  This weighted homogeneity is the link between
Hermite spectral damping and Gaussian parameter contraction under the
Ornstein--Uhlenbeck semigroup.

\begin{remark}[Heat-flow interpretation and matrix covariance defects]
When \(S\ge0\), the coefficient generating function above is equivalently
obtained by applying the anisotropic parameter-space heat semigroup
\[
\exp\!\left(\frac12 S:\nabla_u^2\right)
\]
to the pure-shift generating function \(\Phi(u;v)\).  In this case \(S\)
represents a genuine covariance increment generated by a heat flow in the
mean-parameter variable.

For a general symmetric matrix \(S\) satisfying \(I+S>0\), however, \(S\) may
have negative eigenvalues.  In that case the same coefficient formula should
be understood as an algebraic Gaussian identity, or equivalently as the
analytic continuation of the positive-semidefinite heat-flow formula, rather
than as a genuine heat semigroup.
\end{remark}

\section{Ornstein--Uhlenbeck covariance}

Let
\[
L:=\Delta_v-v\cdot\nabla_v
\]
be the Ornstein--Uhlenbeck generator associated with the reference Gaussian
measure \(\gamma_d\). Equivalently, \(L\) is the infinitesimal generator of
the diffusion
\[
dX_t=-X_t\,dt+\sqrt2\,dB_t,
\]
whose invariant probability measure is \(\gamma_d\).

Set
\[
N:=-L.
\]
Then
\[
N\mathrm{He}_\alpha=|\alpha|\mathrm{He}_\alpha.
\]
The Ornstein--Uhlenbeck semigroup is
\[
T_t=e^{tL}=e^{-tN},
\qquad t\ge0.
\]
Hence, if
\[
f=\sum_{m=0}^\infty \Pi_m f
\]
is the Hermite chaos decomposition of \(f\in L^2(\gamma_d)\), then
\begin{equation}
T_tf
=
\sum_{m=0}^\infty e^{-mt}\Pi_m f.
\label{eq:OU_Hermite_spectral_form}
\end{equation}

\begin{theorem}[OU covariance of Gaussian density ratios]
\label{thm:anisotropic_OU_covariance}
Let \(u\in\mathbb R^d\) and let \(S=S^T\) satisfy \(I+S>0\).
Interpreting \(T_t\) as the Ornstein--Uhlenbeck Markov semigroup acting on
density ratios with respect to \(\gamma_d\), one has
\begin{equation}
T_tK_{u,S}
=
K_{e^{-t}u,e^{-2t}S},
\qquad t\ge0,
\label{eq:anisotropic_OU_covariance}
\end{equation}
in \(L^1(\gamma_d)\). If in addition \(\|S\|_{\mathrm{op}}<1\), then the
identity also holds in \(L^2(\gamma_d)\) and agrees with the Hermite spectral
action of \(T_t=e^{-tN}\).

In particular, in the isotropic heating parametrization \(S=\sigma^2I\),
\[
T_tK_{u,\sigma}
=
K_{e^{-t}u,e^{-t}\sigma}.
\]
\end{theorem}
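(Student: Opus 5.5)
The plan is to work directly with the Mehler representation of the Ornstein--Uhlenbeck semigroup rather than with Hermite series, since only this gives the \(L^1\) statement without restricting \(S\). For \(f\in L^1(\gamma_d)\) the Markov semigroup acts as
\[
T_tf(x)=\mathbb E\,f\!\left(e^{-t}x+\sqrt{1-e^{-2t}}\,Y\right),\qquad Y\sim N(0,I).
\]
By \(\gamma_d\)-symmetry of \(T_t\), this operator is also the adjoint action on densities. Concretely, if \(f\,d\gamma_d\) is the law of a random vector \(X\), then \((T_tf)\,d\gamma_d\) is the law of
\[
X_t:=e^{-t}X+\sqrt{1-e^{-2t}}\,Y',
\]
where \(Y'\sim N(0,I)\) is independent of \(X\). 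I will verify this duality by pairing with a bounded test function \(\varphi\): the identity \(\int \varphi\,T_tf\,d\gamma_d=\int (T_t\varphi) f\,d\gamma_d\) follows from the symmetry of the Mehler kernel.

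Applying this to \(f=K_{u,S}\), so that \(X\sim N(u,I+S)\), the vector \(X_t\) is Gaussian with mean \(e^{-t}u\) and covariance
\[
e^{-2t}(I+S)+(1-e^{-2t})I=I+e^{-2t}S.
\]
This covariance is positive definite because it is a convex combination of \(I+S>0\) and \(I\). Hence \(T_tK_{u,S}\,d\gamma_d=g_{e^{-t}u,\,I+e^{-2t}S}\,dv\), that is, \(T_tK_{u,S}=K_{e^{-t}u,e^{-2t}S}\) as densities in \(L^1(\gamma_d)\). As a cross-check, one can compute the coefficient generating function: by the formula in the Coefficient notation subsection, \(\mathbb E\exp(z\cdot X_t-|z|^2/2)=\exp(e^{-t}u\cdot z+\tfrac12 e^{-2t}z^TSz)\).

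For the \(L^2\) statement, assume \(\|S\|_{\mathrm{op}}<1\). I first need \(K_{u,S}\in L^2(\gamma_d)\). This requires \(2(I+S)^{-1}-I>0\), i.e.\ \(I-S>0\), which holds; the Gaussian integral is then finite. The same holds for \(e^{-2t}S\). By the Coefficient notation subsection,
\[
K_{u,S}=\sum_\alpha \frac{1}{\alpha!}A_\alpha(S,u)\mathrm{He}_\alpha \quad\text{in } L^2(\gamma_d).
\]
The spectral form \eqref{eq:OU_Hermite_spectral_form} gives
\[
T_tK_{u,S}=\sum_\alpha \frac{e^{-t|\alpha|}}{\alpha!}A_\alpha(S,u)\mathrm{He}_\alpha .
\]
Weighted homogeneity \eqref{eq:anisotropic_weighted_homogeneity} with \(\lambda=e^{-t}\) turns each coefficient into \(A_\alpha(e^{-2t}S,e^{-t}u)\). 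The resulting series is the \(L^2\) expansion of \(K_{e^{-t}u,e^{-2t}S}\). Since the \(L^2\) semigroup and the Markov semigroup agree on \(L^2\subset L^1\), the two descriptions coincide. The isotropic case follows by substituting \(S=\sigma^2I\), since \(e^{-2t}\sigma^2=(e^{-t}\sigma)^2\).

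The main obstacle is the duality step: being careful that "acting on density ratios" means the \(\gamma_d\)-adjoint, and that this equals the Mehler action because \(T_t\) is symmetric. The \(L^1\) case must also avoid Hermite expansions entirely, because \(K_{u,S}\) need not lie in \(L^2\) when \(S\) has an eigenvalue \(\ge 1\). I would settle this by the explicit test-function pairing with the symmetric Mehler kernel, as described above.
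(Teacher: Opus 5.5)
Your proposal is correct and follows essentially the same route as the paper: the OU representation $X_t=e^{-t}X_0+\sqrt{1-e^{-2t}}\,Z$ shows that $X_0\sim N(u,I+S)$ evolves to $N(e^{-t}u,I+e^{-2t}S)$, reversibility identifies the density-ratio evolution with $T_t$ itself, and in the regime $\|S\|_{\mathrm{op}}<1$ the identity is checked on Hermite coefficients via weighted homogeneity. Your test-function pairing with the symmetric Mehler kernel and the convex-combination argument for $I+e^{-2t}S>0$ make explicit two steps the paper only states briefly.
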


\begin{proof}
The Ornstein--Uhlenbeck process has the explicit representation
\[
X_t=e^{-t}X_0+\sqrt{1-e^{-2t}}\,Z,
\qquad Z\sim N(0,I),
\]
where \(Z\) is independent of \(X_0\). If
\[
X_0\sim N(u,I+S),
\]
then
\[
X_t\sim N(e^{-t}u,I+e^{-2t}S).
\]

Since \(\gamma_d=N(0,I)\) is invariant for the Ornstein--Uhlenbeck semigroup
and the semigroup is reversible with respect to \(\gamma_d\), the forward
evolution of densities, when written as density ratios with respect to
\(\gamma_d\), is represented by the same operator \(T_t\). Thus the Gaussian
density ratio \(K_{u,S}\) evolves to the Gaussian density ratio corresponding
to \(N(e^{-t}u,I+e^{-2t}S)\). Hence
\[
T_tK_{u,S}
=
K_{e^{-t}u,e^{-2t}S}
\]
in \(L^1(\gamma_d)\).

If, in addition, \(\|S\|_{\mathrm{op}}<1\), then
\[
K_{u,S}\in L^2(\gamma_d),
\qquad
K_{e^{-t}u,e^{-2t}S}\in L^2(\gamma_d).
\]
In this \(L^2\)-regime, the same identity can be checked on Hermite
coefficients. Indeed,
\[
K_{u,S}
=
\sum_{\alpha\in\mathbb N^d}
\frac{1}{\alpha!}A_\alpha(S,u)\mathrm{He}_\alpha
\qquad\text{in }L^2(\gamma_d).
\]
Using
\[
T_t\mathrm{He}_\alpha=e^{-t|\alpha|}\mathrm{He}_\alpha,
\]
we obtain
\[
T_tK_{u,S}
=
\sum_{\alpha\in\mathbb N^d}
\frac{e^{-t|\alpha|}}{\alpha!}
A_\alpha(S,u)\mathrm{He}_\alpha.
\]
By the weighted homogeneity
\[
A_\alpha(e^{-2t}S,e^{-t}u)
=
e^{-t|\alpha|}A_\alpha(S,u),
\]
this becomes
\[
T_tK_{u,S}
=
\sum_{\alpha\in\mathbb N^d}
\frac{1}{\alpha!}
A_\alpha(e^{-2t}S,e^{-t}u)\mathrm{He}_\alpha
=
K_{e^{-t}u,e^{-2t}S}
\qquad\text{in }L^2(\gamma_d).
\]
\end{proof}

\section{Exact norm formula and critical boundary}

\begin{theorem}[Exact \(L^2\) norm and critical boundary]
\label{thm:anisotropic_exact_L2_norm}
Assume \(S=S^T\) and \(I+S>0\). Then
\[
K_{u,S}\in L^2(\gamma_d)
\quad\Longleftrightarrow\quad
\|S\|_{\mathrm{op}}<1.
\]
Moreover, if \(\|S\|_{\mathrm{op}}<1\), then
\begin{equation}
\|K_{u,S}\|_{L^2(\gamma_d)}^2
=
\det(I-S^2)^{-1/2}
\exp\!\left(
u^T(I-S)^{-1}u
\right).
\label{eq:anisotropic_exact_L2_norm}
\end{equation}
\end{theorem}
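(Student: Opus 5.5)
The plan is to compute $\|K_{u,S}\|^2_{L^2(\gamma_d)}$ directly as a Gaussian integral in $v$ and read off both the integrability threshold and the closed form. Write $\Sigma=I+S$. Then
\[
\|K_{u,S}\|_{L^2(\gamma_d)}^2=\int_{\mathbb R^d}\frac{g_{u,\Sigma}(v)^2}{w(v)}\,dv
=(2\pi)^{-d/2}\det(\Sigma)^{-1}\int_{\mathbb R^d}\exp\!\Bigl(-(v-u)^T\Sigma^{-1}(v-u)+\tfrac12|v|^2\Bigr)dv .
\]
The exponent is a quadratic form in $v$ with quadratic part $-\tfrac12 v^TQv$, where
\[
Q:=2\Sigma^{-1}-I=\Sigma^{-1}(2I-\Sigma)=(I+S)^{-1}(I-S).
\]
Since $S$ is symmetric, I will diagonalize it. If $s_j$ are the eigenvalues of $S$, then $s_j>-1$ by $I+S>0$, and the eigenvalues of $Q$ are $(1-s_j)/(1+s_j)$.

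For the equivalence, the integral is finite if and only if $Q>0$, that is, $s_j<1$ for all $j$. Combined with $s_j>-1$, this is exactly $\|S\|_{\mathrm{op}}<1$. Conversely, if some $s_j\ge1$, then along that eigendirection the integrand does not decay: it is constant or growing in $v_j$. Fubini in the eigenbasis then shows the integral diverges, since the remaining directions contribute a positive factor. This settles the first claim.

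For the norm formula, assume $Q>0$ and complete the square. Expanding,
\[
-(v-u)^T\Sigma^{-1}(v-u)+\tfrac12|v|^2=-\tfrac12v^TQv+2u^T\Sigma^{-1}v-u^T\Sigma^{-1}u .
\]
The standard formula $\int e^{-\frac12v^TQv+b^Tv}dv=(2\pi)^{d/2}\det(Q)^{-1/2}e^{\frac12 b^TQ^{-1}b}$ applies with $b=2\Sigma^{-1}u$. The determinant prefactor becomes
\[
\det(\Sigma)^{-1}\det(Q)^{-1/2}=\det(I+S)^{-1}\det(I+S)^{1/2}\det(I-S)^{-1/2}=\det(I-S^2)^{-1/2},
\]
using that $S$ commutes with $(I\pm S)^{\pm1}$.

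The exponent is $2u^T\Sigma^{-1}Q^{-1}\Sigma^{-1}u-u^T\Sigma^{-1}u$. Here $\Sigma^{-1}Q^{-1}\Sigma^{-1}=(I+S)^{-1}(I-S)^{-1}$, so the exponent equals
\[
u^T(I+S)^{-1}\bigl[2(I-S)^{-1}-I\bigr]u=u^T(I+S)^{-1}(I+S)(I-S)^{-1}u=u^T(I-S)^{-1}u,
\]
which is \eqref{eq:anisotropic_exact_L2_norm}.

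The main obstacle is only bookkeeping with the commuting matrix functions of $S$. The safest route is to carry out every computation in the eigenbasis of $S$, where each step reduces to scalar identities, and to handle the divergence direction carefully at the boundary case $s_j=1$. As a sanity check, one can compare with the Hermite side: Parseval gives $\sum_\alpha A_\alpha(S,u)^2/\alpha!$, which for $d=1$ is Mehler-type. This check is not needed for the proof.
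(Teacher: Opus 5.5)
Your proposal is correct and follows essentially the same route as the paper: write $\|K_{u,S}\|_{L^2(\gamma_d)}^2$ as a Gaussian integral, note that it converges exactly when $2(I+S)^{-1}-I>0$ (i.e.\ $S<I$, which together with $I+S>0$ gives $\|S\|_{\mathrm{op}}<1$), and complete the square. Your version fills in details the paper leaves implicit, namely the eigenbasis factorization with Tonelli for divergence when some eigenvalue is $\ge 1$, and the explicit determinant and exponent bookkeeping, and these computations check out.
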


\begin{proof}
By definition,
\[
K_{u,S}(v)^2\,d\gamma_d(v)
=
(2\pi)^{-d/2}\det(I+S)^{-1}
\exp\!\left(
-(v-u)^T(I+S)^{-1}(v-u)
+\frac12|v|^2
\right)\,dv.
\]
The quadratic part is integrable precisely when
\[
2(I+S)^{-1}-I>0,
\]
which is equivalent to
\[
S<I.
\]
Together with \(I+S>0\), this is equivalent to
\[
\|S\|_{\mathrm{op}}<1.
\]
Completing the square gives
\[
\|K_{u,S}\|_{L^2(\gamma_d)}^2
=
\det(I-S^2)^{-1/2}
\exp\!\left(
u^T(I-S)^{-1}u
\right).
\]
\end{proof}

In the isotropic case \(S=\sigma^2I\), this becomes
\begin{equation}
\|K_{u,\sigma}\|_{L^2(\gamma_d)}^2
=
(1-\sigma^4)^{-d/2}
\exp\!\left(
\frac{|u|^2}{1-\sigma^2}
\right),
\qquad |\sigma|<1.
\label{eq:isotropic_exact_L2_norm}
\end{equation}

\section{Exact block-energy generating function}

Let \(\Pi_m\) denote the orthogonal projection in \(L^2(\gamma_d)\) onto the
\(m\)-th Hermite chaos
\[
\mathcal H_m
=
\operatorname{span}\{\mathrm{He}_\alpha:|\alpha|=m\}.
\]

\begin{theorem}[Anisotropic block-energy generating function]
\label{thm:anisotropic_block_energy_generating_function}
Assume \(S=S^T\), \(I+S>0\), and \(\|S\|_{\mathrm{op}}<1\). Then, for every
\(t\ge0\) such that
\[
t^2\|S\|_{\mathrm{op}}<1,
\]
we have
\begin{equation}
\sum_{m=0}^\infty
t^{2m}
\|\Pi_mK_{u,S}\|_{L^2(\gamma_d)}^2
=
\|K_{tu,t^2S}\|_{L^2(\gamma_d)}^2.
\label{eq:anisotropic_block_energy_abstract}
\end{equation}
Equivalently,
\begin{equation}
\sum_{m=0}^\infty
t^{2m}
\|\Pi_mK_{u,S}\|_{L^2(\gamma_d)}^2
=
\det(I-t^4S^2)^{-1/2}
\exp\!\left(
t^2u^T(I-t^2S)^{-1}u
\right).
\label{eq:anisotropic_block_energy_closed_form}
\end{equation}
\end{theorem}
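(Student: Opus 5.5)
The plan is to compute the left-hand side of \eqref{eq:anisotropic_block_energy_abstract} from the Hermite coefficients, and to use the weighted homogeneity \eqref{eq:anisotropic_weighted_homogeneity} to recognise it as the Parseval sum of the rescaled ratio \(K_{tu,t^2S}\). The closed form then follows by inserting Theorem~\ref{thm:anisotropic_exact_L2_norm}.

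\textbf{Block energies.} Since \(\|S\|_{\mathrm{op}}<1\), Theorem~\ref{thm:anisotropic_exact_L2_norm} gives \(K_{u,S}\in L^2(\gamma_d)\). Its expansion \(K_{u,S}=\sum_\alpha \frac{1}{\alpha!}A_\alpha(S,u)\mathrm{He}_\alpha\) holds in \(L^2(\gamma_d)\), and \(\{\mathrm{He}_\alpha/\sqrt{\alpha!}\}\) is an orthonormal basis. Orthogonality therefore gives
\[
\|\Pi_mK_{u,S}\|_{L^2(\gamma_d)}^2=\sum_{|\alpha|=m}\frac{A_\alpha(S,u)^2}{\alpha!}.
\]

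\textbf{Homogeneity.} Take \(t\ge0\) and \(\lambda=t\) in \eqref{eq:anisotropic_weighted_homogeneity}. Then
\[
t^{2m}A_\alpha(S,u)^2=\bigl(t^{|\alpha|}A_\alpha(S,u)\bigr)^2=A_\alpha(t^2S,tu)^2 \qquad\text{for }|\alpha|=m .
\]
Summing over \(m\) (all terms are nonnegative, so Tonelli allows reordering) gives
\[
\sum_m t^{2m}\|\Pi_mK_{u,S}\|^2=\sum_\alpha \frac{A_\alpha(t^2S,tu)^2}{\alpha!}.
\]

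\textbf{Parseval for the rescaled ratio.} We need \(K_{tu,t^2S}\) to be a legitimate element of \(L^2(\gamma_d)\) whose Hermite coefficients are \(A_\alpha(t^2S,tu)\). Two conditions must be checked.
\begin{itemize}
\item \(I+t^2S>0\), so that \(K_{tu,t^2S}\) is defined. This holds because \(t^2\|S\|_{\mathrm{op}}<1\) makes every eigenvalue of \(t^2S\) larger than \(-1\).
\item \(\|t^2S\|_{\mathrm{op}}<1\), which is exactly the hypothesis. By Theorem~\ref{thm:anisotropic_exact_L2_norm} this gives \(K_{tu,t^2S}\in L^2(\gamma_d)\).
\end{itemize}
The coefficient identification of the section on full-covariance ratios then applies with \((u,S)\) replaced by \((tu,t^2S)\): \(\langle K_{tu,t^2S},\mathrm{He}_\alpha\rangle=A_\alpha(t^2S,tu)\). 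Parseval in the normalized basis gives \(\sum_\alpha A_\alpha(t^2S,tu)^2/\alpha!=\|K_{tu,t^2S}\|^2\), which is \eqref{eq:anisotropic_block_energy_abstract}. The same argument also shows that the series is finite.

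\textbf{Closed form.} Substitute \(u\mapsto tu\) and \(S\mapsto t^2S\) into \eqref{eq:anisotropic_exact_L2_norm}. This gives \(\det(I-t^4S^2)^{-1/2}\exp\bigl(t^2u^T(I-t^2S)^{-1}u\bigr)\), which is \eqref{eq:anisotropic_block_energy_closed_form}.

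The only step that needs care is the third one. The identity itself is formal, but one must confirm that the rescaled parameters stay in the admissible region \(I+t^2S>0\) and that the \(L^2\) criterion holds. Note that \(t\) may exceed \(1\), so this does not follow from the case \(t\le1\) and the OU covariance (Theorem~\ref{thm:anisotropic_OU_covariance}). For \(t\le1\) one could alternatively use \(\|T_sK\|^2=\sum e^{-2ms}\|\Pi_mK\|^2\) with \(t=e^{-s}\); the direct Parseval argument handles all admissible \(t\) uniformly.
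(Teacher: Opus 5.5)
Your proof is correct and follows essentially the same route as the paper: weighted homogeneity identifies the rescaled block energies with those of \(K_{tu,t^2S}\), and orthogonality (Parseval/Pythagoras) sums them to \(\|K_{tu,t^2S}\|_{L^2(\gamma_d)}^2\), with Theorem~\ref{thm:anisotropic_exact_L2_norm} supplying the closed form. The only differences are that you argue coefficient by coefficient rather than through the block identity \(\Pi_mK_{tu,t^2S}=t^m\Pi_mK_{u,S}\), and that you explicitly check the admissibility conditions \(I+t^2S>0\) and \(\|t^2S\|_{\mathrm{op}}<1\), which the paper leaves implicit.
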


\begin{proof}
By weighted homogeneity,
\[
\Pi_mK_{tu,t^2S}
=
t^m\Pi_mK_{u,S}.
\]
Since the Hermite chaos spaces are mutually orthogonal in \(L^2(\gamma_d)\),
Pythagoras' theorem gives
\[
\|K_{tu,t^2S}\|_{L^2(\gamma_d)}^2
=
\sum_{m=0}^\infty
\|\Pi_mK_{tu,t^2S}\|_{L^2(\gamma_d)}^2
=
\sum_{m=0}^\infty
t^{2m}
\|\Pi_mK_{u,S}\|_{L^2(\gamma_d)}^2.
\]
The closed form follows by applying
Theorem~\ref{thm:anisotropic_exact_L2_norm} to \(K_{tu,t^2S}\).
\end{proof}

In the isotropic case \(S=\sigma^2I\), this gives, for \(0<|\sigma|<1\),
\begin{equation}
\sum_{m=0}^\infty
t^{2m}
\|\Pi_mK_{u,\sigma}\|_{L^2(\gamma_d)}^2
=
(1-t^4\sigma^4)^{-d/2}
\exp\!\left(
\frac{t^2|u|^2}{1-t^2\sigma^2}
\right),
\qquad
0\le t<|\sigma|^{-1}.
\label{eq:isotropic_block_energy_closed_form}
\end{equation}
If \(\sigma=0\), the same identity holds for every \(t\ge0\), with right-hand
side equal to \(\exp(t^2|u|^2)\).

\section{Sharp Hermite root rate}

Define
\[
a_m(u,S)
:=
\|\Pi_mK_{u,S}\|_{L^2(\gamma_d)}^2.
\]

\begin{theorem}[Sharp anisotropic Hermite root rate]
\label{thm:anisotropic_sharp_root_rate}
Assume \(S=S^T\), \(I+S>0\), and
\[
0<\|S\|_{\mathrm{op}}<1.
\]
Then
\begin{equation}
\limsup_{m\to\infty}
a_m(u,S)^{1/m}
=
\|S\|_{\mathrm{op}}.
\label{eq:anisotropic_block_energy_root_rate}
\end{equation}
Equivalently,
\begin{equation}
\limsup_{m\to\infty}
\|\Pi_mK_{u,S}\|_{L^2(\gamma_d)}^{1/m}
=
\|S\|_{\mathrm{op}}^{1/2}.
\label{eq:anisotropic_block_norm_root_rate}
\end{equation}

Let
\[
R_M^{\mathrm{sp}}(u,S)
:=
K_{u,S}-\Pi_{\le M}K_{u,S}
=
\sum_{m=M+1}^\infty \Pi_mK_{u,S}.
\]
Then
\begin{equation}
\limsup_{M\to\infty}
\|R_M^{\mathrm{sp}}(u,S)\|_{L^2(\gamma_d)}^{1/M}
=
\|S\|_{\mathrm{op}}^{1/2}.
\label{eq:anisotropic_tail_root_rate}
\end{equation}
In particular, for every
\[
\|S\|_{\mathrm{op}}^{1/2}<\rho<1,
\]
there exists \(C=C(u,S,\rho)>0\) such that
\begin{equation}
\|R_M^{\mathrm{sp}}(u,S)\|_{L^2(\gamma_d)}
\le
C\rho^M,
\qquad M\ge0.
\label{eq:anisotropic_rho_tail_bound}
\end{equation}
The rate \(\|S\|_{\mathrm{op}}^{1/2}\) is optimal in the root-test sense.
\end{theorem}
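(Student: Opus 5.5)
The plan is to read the sharp rate off the radius of convergence of the block-energy generating function in Theorem~\ref{thm:anisotropic_block_energy_generating_function}. Put \(x=t^2\) and consider the power series \(G(x):=\sum_{m\ge0}a_m(u,S)x^m\) with nonnegative coefficients. The theorem identifies it on \(0\le x<\|S\|_{\mathrm{op}}^{-1}\) with
\[
F(x)=\det(I-x^2S^2)^{-1/2}\exp\!\left(x\,u^T(I-xS)^{-1}u\right).
\]
By Cauchy--Hadamard, \(\limsup a_m^{1/m}=1/R\), where \(R\) is the radius of convergence of \(G\). The whole argument therefore reduces to showing \(R=\|S\|_{\mathrm{op}}^{-1}\).

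The lower bound \(R\ge\|S\|_{\mathrm{op}}^{-1}\) is immediate, because \(G\) converges for every real \(0\le x<\|S\|_{\mathrm{op}}^{-1}\).

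For the upper bound I will use Pringsheim's theorem: a power series with nonnegative coefficients has a singularity at \(x=R\). Suppose \(R>\|S\|_{\mathrm{op}}^{-1}\). Then \(G\) would be analytic on the disc of radius \(R\) and would agree with \(F\) on \([0,\|S\|_{\mathrm{op}}^{-1})\). Diagonalize \(S=Q\,\mathrm{diag}(s_j)Q^T\), choose \(j\) with \(|s_j|=\|S\|_{\mathrm{op}}=:s\), and write \(\hat u=Q^Tu\). Then
\[
F(x)=\prod_i(1-x^2s_i^2)^{-1/2}\exp\!\Big(\sum_i \frac{x\hat u_i^2}{1-xs_i}\Big).
\]
As \(x\uparrow 1/s\), the factor \(\prod_{i:|s_i|=s}(1-x^2s^2)^{-1/2}\) blows up like \((1-xs)^{-k/2}\), where \(k\ge1\) is the number of such eigenvalues. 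The exponential factor is bounded below by a positive constant, since its exponent is bounded below on \([0,1/s)\): terms with \(s_i=s\) tend to \(+\infty\) or vanish, and all other terms stay bounded. Hence \(F(x)\to\infty\) as \(x\uparrow 1/s\). This contradicts analyticity of \(G\) at \(1/s\) (a continuous extension would be finite there), so \(R=1/s\). The main point needing care is this no-cancellation check when \(S\) has eigenvalues \(\pm s\) and \(u\) has components along them. Positivity of every factor on the real segment settles it, and nonnegativity of the coefficients lets me avoid any complex-analytic subtlety. This proves \eqref{eq:anisotropic_block_energy_root_rate}, and taking square roots gives \eqref{eq:anisotropic_block_norm_root_rate}.

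For the tail, orthogonality of the chaoses gives \(\|R_M^{\mathrm{sp}}\|^2=\sum_{m>M}a_m\). For the upper bound, fix \(s<r<1\). Then \(a_m\le C_r r^m\) for all \(m\), so \(\sum_{m>M}a_m\le C_r r^{M+1}/(1-r)\). Taking \(M\)-th roots and letting \(r\downarrow s\) gives \(\limsup\|R_M\|^{1/M}\le s^{1/2}\). Choosing \(r=\rho^2\) for a given \(\rho\in(s^{1/2},1)\) yields \eqref{eq:anisotropic_rho_tail_bound} for \(M\ge0\), after enlarging \(C\) to cover small \(M\). For the lower bound, \(\|R_{M-1}\|^2\ge a_M\), so along a subsequence with \(a_M^{1/M}\to s\) we get \(\|R_{M-1}\|^{1/(M-1)}\ge a_M^{1/(2(M-1))}\to s^{1/2}\). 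Here \(s>0\) is used so that \(a_M>0\) infinitely often. This gives \eqref{eq:anisotropic_tail_root_rate}, and with it the optimality claim, since no \(\rho<s^{1/2}\) can satisfy \eqref{eq:anisotropic_rho_tail_bound}.
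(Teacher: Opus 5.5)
Your proof is correct, but you handle the key step differently from the paper.

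\textbf{The paper's route.} It extends the closed form of \(F\) to complex \(z\) by analytic continuation. It fixes a branch of \(\det(I-z^2S^2)^{-1/2}\) on the disk \(|z|<\|S\|_{\mathrm{op}}^{-1}\). It then argues that on the boundary circle the algebraic singularity of the determinant factor cannot be cancelled by the exponential factor: either that factor is regular and nonzero there, or it is itself singular. Only then does it apply Cauchy--Hadamard.

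\textbf{Your route.} You use that \(a_m(u,S)=\|\Pi_mK_{u,S}\|_{L^2(\gamma_d)}^2\ge0\) and stay on the real segment. On \([0,1/s)\), every determinant factor \((1-x^2s_i^2)^{-1/2}\) is \(\ge1\), the exponent \(\sum_i x\hat u_i^2/(1-xs_i)\) is \(\ge0\), and the factor \((1-x^2s^2)^{-1/2}\) blows up at \(1/s\). So the series must diverge at \(x=1/s\).

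You do not actually need Pringsheim's theorem for this. Monotone convergence already gives \(\sum_m a_m s^{-m}=\lim_{x\uparrow 1/s}F(x)=\infty\), hence \(R\le 1/s\). Together with convergence on \([0,1/s)\), which gives \(R\ge1/s\), this yields the radius without any complex analysis.

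\textbf{Comparison.} Your route is more elementary. It also sidesteps the one delicate point in the paper's argument, the non-cancellation claim, which the paper settles rather briefly (``singular a fortiori''); you replace it with a sign argument.

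What the paper's complex-analytic route buys is a full picture of the singularities on \(|z|=1/s\): both \(z=1/s\) and \(z=-1/s\), together with their algebraic or exponential nature. This is the information the paper reuses later for the precise isotropic asymptotics via singularity analysis. For the root rate alone, your positivity argument is sufficient.

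\textbf{Tail estimates.} Your treatment matches the paper's. Your handling of the index shift, \(\|R_{M-1}^{\mathrm{sp}}\|^{1/(M-1)}\ge a_M^{1/(2(M-1))}\to s^{1/2}\) along a subsequence, is if anything stated more carefully than in the paper.
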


\begin{proof}
Set
\[
F(z):=
\sum_{m=0}^\infty a_m(u,S)z^m.
\]
For real \(z=t^2\ge0\) sufficiently small, the block-energy identity gives
the closed form below. Since both sides are analytic in a neighborhood of
\(z=0\), the identity extends to complex \(z\) by analytic continuation.
By Theorem~\ref{thm:anisotropic_block_energy_generating_function},
\[
F(z)
=
\det(I-z^2S^2)^{-1/2}
\exp\!\left(
zu^T(I-zS)^{-1}u
\right).
\]
Diagonalize
\[
S=Q\Lambda Q^T,
\qquad
\Lambda=\operatorname{diag}(\lambda_1,\dots,\lambda_d),
\]
where \(Q\) is orthogonal. Writing
\[
\tilde u:=Q^Tu,
\]
we have
\[
\det(I-z^2S^2)^{-1/2}
=
\prod_{j=1}^d(1-z^2\lambda_j^2)^{-1/2},
\]
where the branch is chosen to be \(1\) at \(z=0\). Moreover,
\[
zu^T(I-zS)^{-1}u
=
z\sum_{j=1}^d
\frac{\tilde u_j^2}{1-z\lambda_j}.
\]
Therefore both the determinant factor and the exponential factor are analytic
in the disk
\[
|z|<\|S\|_{\mathrm{op}}^{-1}.
\]
Consequently \(F\) is analytic in this disk.

Let \(\lambda_j\) be an eigenvalue such that
\[
|\lambda_j|=\|S\|_{\mathrm{op}}.
\]
Then the determinant factor has a non-removable algebraic singularity at a
point of the circle
\[
|z|=\|S\|_{\mathrm{op}}^{-1}.
\]
At such a point, if the exponential factor is regular, then it is nonzero and
cannot cancel the algebraic singularity. If the exponential factor is singular,
then the product is singular a fortiori. Hence \(F\) has a non-removable
singularity on the circle
\[
|z|=\|S\|_{\mathrm{op}}^{-1}.
\]
Therefore the radius of convergence of \(F\) is exactly
\[
R=\|S\|_{\mathrm{op}}^{-1}.
\]
By the Cauchy--Hadamard formula,
\[
\limsup_{m\to\infty}a_m(u,S)^{1/m}
=
R^{-1}
=
\|S\|_{\mathrm{op}}.
\]
Taking square roots gives
\eqref{eq:anisotropic_block_norm_root_rate}.

Moreover,
\[
\|R_M^{\mathrm{sp}}(u,S)\|_{L^2(\gamma_d)}^2
=
\sum_{m=M+1}^\infty a_m(u,S).
\]
The upper bound follows from the root estimate: for every
\(\rho\) satisfying
\[
\|S\|_{\mathrm{op}}^{1/2}<\rho<1,
\]
there exists \(C_1>0\) such that
\[
a_m(u,S)\le C_1\rho^{2m}
\qquad(m\ge0).
\]
Hence
\[
\|R_M^{\mathrm{sp}}(u,S)\|_{L^2(\gamma_d)}^2
=
\sum_{m=M+1}^\infty a_m(u,S)
\le
C_1\sum_{m=M+1}^\infty \rho^{2m}
\le
C_2\rho^{2M}.
\]
Taking square roots gives the upper bound.

For the lower bound, fix \(\varepsilon>0\). Since
\[
\limsup_{m\to\infty}a_m(u,S)^{1/m}
=
\|S\|_{\mathrm{op}},
\]
there are infinitely many \(m\) such that
\[
a_m(u,S)\ge
\left(\|S\|_{\mathrm{op}}-\varepsilon\right)^m.
\]
For such \(m\),
\[
\|R_{m-1}^{\mathrm{sp}}(u,S)\|_{L^2(\gamma_d)}^2
=
\sum_{k=m}^\infty a_k(u,S)
\ge
a_m(u,S).
\]
Therefore
\[
\limsup_{M\to\infty}
\|R_M^{\mathrm{sp}}(u,S)\|_{L^2(\gamma_d)}^{1/M}
\ge
\left(\|S\|_{\mathrm{op}}-\varepsilon\right)^{1/2}.
\]
Letting \(\varepsilon\downarrow0\) gives the matching lower bound. This proves
\eqref{eq:anisotropic_tail_root_rate}.
\end{proof}

In the isotropic case \(S=\sigma^2I\), the sharp rate becomes
\[
\|S\|_{\mathrm{op}}^{1/2}=|\sigma|.
\]
Thus
\[
\limsup_{M\to\infty}
\|K_{u,\sigma}-\Pi_{\le M}K_{u,\sigma}\|_{L^2(\gamma_d)}^{1/M}
=
|\sigma|.
\]

\subsection{Comparison with the standard Hermite spectral bound}

Throughout this subsection, assume that \(S=S^T\) and
\[
0<\|S\|_{\mathrm{op}}<1.
\]
Then \(I+S>0\), and hence \(K_{u,S}\) is well-defined and belongs to
\(L^2(\gamma_d)\).

Recall that
\[
L=\Delta_v-v\cdot\nabla_v
\]
is the Ornstein--Uhlenbeck generator on \(L^2(\gamma_d)\), and that the
number operator is
\[
N:=-L.
\]
Thus
\[
N\mathrm{He}_\alpha=|\alpha|\mathrm{He}_\alpha.
\]
Equivalently, if
\[
f=\sum_{m=0}^{\infty}\Pi_m f
\]
is the Hermite chaos decomposition of \(f\in L^2(\gamma_d)\), then
\[
Nf=\sum_{m=0}^{\infty}m\,\Pi_m f
\]
whenever the right-hand side belongs to \(L^2(\gamma_d)\).

For \(s>0\), define the fractional Hermite-Sobolev domain
\[
\mathcal D(N^{s/2})
:=
\left\{
f\in L^2(\gamma_d):
\sum_{m=0}^{\infty}
m^s\|\Pi_m f\|_{L^2(\gamma_d)}^2<\infty
\right\}.
\]
For \(f\in\mathcal D(N^{s/2})\), one has
\[
\|N^{s/2}f\|_{L^2(\gamma_d)}^2
=
\sum_{m=0}^{\infty}
m^s\|\Pi_m f\|_{L^2(\gamma_d)}^2.
\]

The standard Hermite-Sobolev estimate follows immediately from the spectral
decomposition. Indeed,
\[
\|f-\Pi_{\le M}f\|_{L^2(\gamma_d)}^2
=
\sum_{m=M+1}^{\infty}
\|\Pi_mf\|_{L^2(\gamma_d)}^2.
\]
If \(f\in\mathcal D(N^{s/2})\), then
\[
\begin{aligned}
\|f-\Pi_{\le M}f\|_{L^2(\gamma_d)}^2
&=
\sum_{m=M+1}^{\infty}
\|\Pi_mf\|_{L^2(\gamma_d)}^2 \\
&\le
(M+1)^{-s}
\sum_{m=M+1}^{\infty}
m^s\|\Pi_mf\|_{L^2(\gamma_d)}^2 \\
&\le
(M+1)^{-s}
\|N^{s/2}f\|_{L^2(\gamma_d)}^2.
\end{aligned}
\]
Therefore
\[
\|f-\Pi_{\le M}f\|_{L^2(\gamma_d)}
\le
(M+1)^{-s/2}
\|N^{s/2}f\|_{L^2(\gamma_d)}.
\]
This estimate gives only algebraic decay in \(M\).

A sharper general estimate is available when the Hermite coefficients of
\(f\) have exponential decay. This is the standard analytic-vector viewpoint
for a nonnegative self-adjoint number operator; see, for example,
\cite{Nelson1959,Janson1997}. For \(a>0\), define
\[
e^{aN}f
:=
\sum_{m=0}^{\infty}e^{am}\Pi_m f
\]
on the domain
\[
\mathcal D(e^{aN})
:=
\left\{
f\in L^2(\gamma_d):
\sum_{m=0}^{\infty}
e^{2am}\|\Pi_m f\|_{L^2(\gamma_d)}^2<\infty
\right\}.
\]
If \(f\in\mathcal D(e^{aN})\), then
\[
\|e^{aN}f\|_{L^2(\gamma_d)}^2
=
\sum_{m=0}^{\infty}
e^{2am}\|\Pi_m f\|_{L^2(\gamma_d)}^2.
\]
Hence
\[
\begin{aligned}
\|f-\Pi_{\le M}f\|_{L^2(\gamma_d)}^2
&=
\sum_{m=M+1}^{\infty}
\|\Pi_mf\|_{L^2(\gamma_d)}^2 \\
&\le
e^{-2a(M+1)}
\sum_{m=M+1}^{\infty}
e^{2am}\|\Pi_mf\|_{L^2(\gamma_d)}^2 \\
&\le
e^{-2a(M+1)}
\|e^{aN}f\|_{L^2(\gamma_d)}^2.
\end{aligned}
\]
Therefore
\[
\|f-\Pi_{\le M}f\|_{L^2(\gamma_d)}
\le
e^{-a(M+1)}
\|e^{aN}f\|_{L^2(\gamma_d)}.
\]
This is the standard analytic-vector Hermite estimate.

We now apply this estimate to the Gaussian density ratio \(K_{u,S}\). By the
Hermite coefficient formula,
\[
K_{u,S}
=
\sum_{\alpha\in\mathbb N^d}
\frac{1}{\alpha!}A_\alpha(S,u)\mathrm{He}_\alpha
\qquad\text{in }L^2(\gamma_d).
\]
Since
\[
N\mathrm{He}_\alpha=|\alpha|\mathrm{He}_\alpha,
\]
we formally obtain
\[
e^{aN}K_{u,S}
=
\sum_{\alpha\in\mathbb N^d}
\frac{e^{a|\alpha|}}{\alpha!}
A_\alpha(S,u)\mathrm{He}_\alpha.
\]
The weighted homogeneity of the coefficients gives
\[
A_\alpha(e^{2a}S,e^a u)
=
e^{a|\alpha|}A_\alpha(S,u).
\]
Therefore, whenever the right-hand side belongs to \(L^2(\gamma_d)\),
\[
e^{aN}K_{u,S}
=
K_{e^a u,e^{2a}S}
\qquad\text{in }L^2(\gamma_d).
\]

By the exact \(L^2\) criterion for Gaussian density ratios,
\[
K_{e^a u,e^{2a}S}\in L^2(\gamma_d)
\]
if and only if
\[
\|e^{2a}S\|_{\mathrm{op}}<1.
\]
Equivalently,
\[
e^{2a}\|S\|_{\mathrm{op}}<1.
\]
Thus the admissible values of \(a\) are precisely
\[
0<a<\frac12\log\frac1{\|S\|_{\mathrm{op}}}.
\]
The endpoint
\[
a_*:=\frac12\log\frac1{\|S\|_{\mathrm{op}}}
\]
is not admissible, since it corresponds to the \(L^2\)-critical boundary
\[
\|e^{2a_*}S\|_{\mathrm{op}}=1.
\]

Consequently, for every
\[
0<a<a_*,
\]
the analytic-vector estimate gives
\[
\|K_{u,S}-\Pi_{\le M}K_{u,S}\|_{L^2(\gamma_d)}
\le
e^{-a(M+1)}
\|K_{e^a u,e^{2a}S}\|_{L^2(\gamma_d)}.
\]
Equivalently, if
\[
r>\|S\|_{\mathrm{op}}^{1/2},
\]
then one may choose \(a>0\) such that
\[
e^{-a}<r
\qquad\text{and}\qquad
e^{2a}\|S\|_{\mathrm{op}}<1.
\]
Hence there exists a constant \(C=C(u,S,r)>0\) such that
\[
\|K_{u,S}-\Pi_{\le M}K_{u,S}\|_{L^2(\gamma_d)}
\le
C r^M,
\qquad M\ge0.
\]

Thus the exponential upper bound obtained from the standard analytic-vector
estimate has the same root rate as the bound obtained from the exact
block-energy generating function. The additional content of the present work
is that, for Gaussian density ratios, the maximal admissible analytic radius is
computed explicitly, the corresponding root rate
\[
\|S\|_{\mathrm{op}}^{1/2}
\]
is shown to be sharp, and the Hermite block energies admit the exact generating
function
\[
\sum_{m=0}^{\infty}
t^{2m}
\|\Pi_mK_{u,S}\|_{L^2(\gamma_d)}^2
=
\det(I-t^4S^2)^{-1/2}
\exp\!\left(
t^2u^T(I-t^2S)^{-1}u
\right).
\]

\section{Precise isotropic asymptotics}

In this section we specialize the covariance defect \(S\) to the isotropic
heating case
\[
S=\sigma^2 I,
\qquad 0<|\sigma|<1.
\]
Thus
\[
K_{u,\sigma}=K_{u,\sigma^2 I}.
\]
This parametrization covers the positive scalar covariance defect \(S=sI\)
with \(s=\sigma^2>0\).  The general scalar defect \(S=sI\), \(-1<s<1\),
has sharp Hermite root rate \(|s|^{1/2}\) by the full-covariance theorem
above.  The precise coefficient and tail asymptotics below are stated for the
heating case \(s>0\), where \(s=\sigma^2\).

\subsection{Analytic radius under isotropic scaling}

Under the scaling parameter \(\varepsilon\), this gives
\[
K_{\varepsilon u,\varepsilon\sigma}
=
K_{\varepsilon u,\varepsilon^2\sigma^2 I},
\]
which is the isotropic counterpart of the anisotropic scaling
\(K_{tu,t^2S}\).

\begin{corollary}[Optimal analytic radius in the isotropic scaling parameter]
\label{cor:optimal_analytic_radius_scaling_parameter}
Fix \(u,v\in\mathbb R^d\) and \(0\neq\sigma\in\mathbb R\). Consider the
complexified scaling parameter \(\varepsilon\in\mathbb C\), and define
\(K_{\varepsilon u,\varepsilon\sigma}(v)\) by the explicit density-ratio
formula. Then the map
\[
\varepsilon\longmapsto K_{\varepsilon u,\varepsilon\sigma}(v)
\]
is holomorphic on the disk
\[
|\varepsilon|<\frac{1}{|\sigma|}.
\]
This radius is optimal. More precisely, the explicit formula has
non-removable singularities at
\[
\varepsilon=\pm\frac{i}{\sigma},
\]
and therefore the Taylor series at \(\varepsilon=0\) has radius of convergence
exactly
\[
\frac1{|\sigma|}.
\]
\end{corollary}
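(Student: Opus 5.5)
Substituting \(\tau=\varepsilon^2\sigma^2\) and \(u\mapsto\varepsilon u\) into the closed form \eqref{eq:U_closed_form} of Proposition~\ref{prop:explicit_heat_evolution_Hermite_generator} gives
\[
K_{\varepsilon u,\varepsilon\sigma}(v)
=
(1+\varepsilon^2\sigma^2)^{-d/2}
\exp\!\left(
\frac{\varepsilon^2\sigma^2|v|^2+2\varepsilon\,u\cdot v-\varepsilon^2|u|^2}{2(1+\varepsilon^2\sigma^2)}
\right).
\]
This expression is the meaning of ``the explicit density-ratio formula'' for complex \(\varepsilon\). The prefactor is taken with the branch equal to \(1\) at \(\varepsilon=0\). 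For \(|\varepsilon|<1/|\sigma|\) we have \(|\varepsilon^2\sigma^2|<1\), so \(1+\varepsilon^2\sigma^2\) lies in the disk \(\{w:|w-1|<1\}\) and never vanishes. The principal branch of \((1+\varepsilon^2\sigma^2)^{-d/2}\) is therefore holomorphic there. The exponent is a rational function of \(\varepsilon\) whose only poles are the zeros of \(1+\varepsilon^2\sigma^2\), namely \(\varepsilon=\pm i/\sigma\), which lie on the circle \(|\varepsilon|=1/|\sigma|\). Hence the map is holomorphic on the open disk.

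For optimality, the plan is to show that \(\varepsilon_0=i/\sigma\) (and symmetrically \(-i/\sigma\)) is a non-removable singularity. Write \(w=1+\varepsilon^2\sigma^2\); near \(\varepsilon_0\), \(w\) vanishes to first order. The numerator of the exponent at \(\varepsilon_0\) equals
\[
N_0=-|v|^2+\frac{2i}{\sigma}u\cdot v+\frac{|u|^2}{\sigma^2}
=-\Bigl(v-\frac{i}{\sigma}u\Bigr)\cdot\Bigl(v-\frac{i}{\sigma}u\Bigr).
\]
I split into two cases. If \(N_0\neq0\), the exponent has a simple pole, so \(\exp(\cdot)\) has an essential singularity. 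It cannot be cancelled by the algebraic factor \(w^{-d/2}\): multiplying by \(w^{d/2}\) (on a slit neighborhood) still leaves \(\exp(c/w+\text{holomorphic})\), which is unbounded along suitable approach directions. If \(N_0=0\), which can happen for special \(u,v\), the numerator vanishes at \(\varepsilon_0\), so the exponent extends holomorphically there. The function then behaves like (nonzero holomorphic)\(\times w^{-d/2}\), which is a pole for even \(d\) and a branch point for odd \(d\), and is non-removable in either case. In both cases \(|K|\) is unbounded, or at least not single-valued holomorphic, near \(\varepsilon_0\).

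The radius statement then follows. A power series converging in a disk larger than \(1/|\sigma|\) would give a holomorphic extension across \(\varepsilon_0\), agreeing with the formula on the punctured neighborhood by the identity theorem, which contradicts non-removability. By the Cauchy--Taylor theorem the radius is at least \(1/|\sigma|\), so it is exactly \(1/|\sigma|\). The main obstacle is making the essential-singularity case fully rigorous in the presence of the multivalued factor. I would handle it by approaching \(\varepsilon_0\) along a ray chosen so that \(\operatorname{Re}(N_0/(2w))\to+\infty\). Along such a ray the exponential grows faster than any power of \(|w|^{-1}\), so boundedness fails and no holomorphic extension exists.
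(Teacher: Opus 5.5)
Your proposal is correct and follows essentially the paper's own argument. Both use the explicit formula and holomorphy on the disk away from the zeros $\pm i/\sigma$ of $1+\varepsilon^2\sigma^2$. Both use the same dichotomy: a genuine pole of the exponent gives an essential singularity, and a removable one leaves the non-removable algebraic singularity of $(1+\varepsilon^2\sigma^2)^{-d/2}$. The radius-of-convergence conclusion then follows in the standard way.

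Your ray estimate and identity-theorem step make explicit what the paper only asserts. One point to keep in mind: the ray may have to leave the disk $|\varepsilon|<1/|\sigma|$. For $u=0$, $v\neq0$ one has $N_0=-|v|^2<0$, and $K\to0$ along every straight ray approaching $i/\sigma$ from inside the disk. This is harmless, because $|K|$ equals $|1+\varepsilon^2\sigma^2|^{-d/2}$ times the exponential of the real part of the exponent, so it does not depend on which branch is used on the slit neighborhood.
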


\begin{proof}
By the explicit formula,
\[
K_{\varepsilon u,\varepsilon\sigma}(v)
=
(1+\varepsilon^2\sigma^2)^{-d/2}
\exp\!\left(
\frac{\varepsilon\,u\cdot v}{1+\varepsilon^2\sigma^2}
+
\frac{\varepsilon^2\sigma^2}{2(1+\varepsilon^2\sigma^2)}|v|^2
-
\frac{\varepsilon^2|u|^2}{2(1+\varepsilon^2\sigma^2)}
\right).
\]
On the disk centered at \(0\) and not containing any zero of
\(1+\varepsilon^2\sigma^2\), we choose the branch of
\[
(1+\varepsilon^2\sigma^2)^{-d/2}
\]
which is normalized to be \(1\) at \(\varepsilon=0\). With this branch, the
above expression is holomorphic as long as
\[
1+\varepsilon^2\sigma^2\neq0.
\]
The nearest zeros of this denominator are
\[
\varepsilon=\pm\frac{i}{\sigma},
\]
and both have modulus \(1/|\sigma|\).

These boundary points are genuine singularities. To see this, let
\[
\varepsilon_0\in\left\{\frac{i}{\sigma},-\frac{i}{\sigma}\right\}.
\]
Near \(\varepsilon_0\), the factor
\[
1+\varepsilon^2\sigma^2
\]
has a simple zero. Hence the prefactor
\[
(1+\varepsilon^2\sigma^2)^{-d/2}
\]
has no holomorphic extension across \(\varepsilon_0\): it is a pole when
\(d\) is even and a branch point when \(d\) is odd.

The exponential factor cannot remove this obstruction. If the rational
function in the exponent has a pole at \(\varepsilon_0\), then the exponential
factor has an essential singularity there. If that pole is removable for a
special choice of \(u\) and \(v\), then the exponential factor extends
holomorphically and is nonzero at \(\varepsilon_0\), so the singularity of the
prefactor remains. Thus \(K_{\varepsilon u,\varepsilon\sigma}(v)\) has a
non-removable singularity at each of
\[
\varepsilon=\pm\frac{i}{\sigma}.
\]
Therefore the Taylor series at \(\varepsilon=0\) has radius of convergence
exactly \(1/|\sigma|\).
\end{proof}

\subsection{Block-energy coefficient asymptotics}

Assume throughout this subsection that
\[
0<|\sigma|<1.
\]
Set
\[
s:=\sigma^2.
\]
For
\[
a_m(u,\sigma):=
\|\Pi_mK_{u,\sigma}\|_{L^2(\gamma_d)}^2,
\]
the isotropic block-energy generating function is
\[
F(z)
:=
\sum_{m=0}^\infty a_m(u,\sigma)z^m
=
(1-s^2z^2)^{-d/2}
\exp\!\left(
\frac{z|u|^2}{1-sz}
\right).
\]

We first record the coefficient asymptotic needed for the nonzero mean case.
Here and below, \([z^m]G(z)\) denotes the coefficient of \(z^m\) in the
Taylor expansion of \(G\) at \(z=0\).
\begin{lemma}[Coefficient asymptotics at an exponential singularity]
\label{lem:exponential_singularity_coefficients}
Let \(\beta>0\) and \(\lambda>0\). Then, as \(m\to\infty\),
\begin{equation}
[z^m]\,
(1-z)^{-\beta}
\exp\!\left(
\frac{\lambda z}{1-z}
\right)
\sim
\frac{1}{2\sqrt\pi}\,
\lambda^{\frac14-\frac\beta2}\,
m^{\frac\beta2-\frac34}
\exp\!\left(
2\sqrt{\lambda m}-\frac{\lambda}{2}
\right).
\label{eq:basic_exponential_singularity_asymptotic}
\end{equation}
\end{lemma}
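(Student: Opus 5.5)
Expanding the exponential and using \([z^n](1-z)^{-\gamma}\) explicitly reduces the coefficient to a Laguerre polynomial evaluated at a negative argument; its Plancherel--Rotach/Perron asymptotics then give the claim. This is consistent with the paper's remark that the block-energy generating function is of Laguerre type.

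\textbf{Laguerre identification.} The classical generating function
\[
\sum_{m\ge0}L_m^{(\nu)}(x)z^m=(1-z)^{-\nu-1}\exp\!\left(-\frac{xz}{1-z}\right)
\]
holds for \(|z|<1\). Taking \(\nu=\beta-1\) and \(x=-\lambda\) identifies the coefficient exactly:
\[
[z^m](1-z)^{-\beta}e^{\lambda z/(1-z)}=L_m^{(\beta-1)}(-\lambda).
\]
The problem is therefore the large-\(m\) behaviour of \(L_m^{(\nu)}(-\lambda)\) for fixed \(\lambda>0\), i.e. off the oscillatory region.

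\textbf{Perron formula.} For this regime there is the classical Perron formula (Szeg\H{o}, Thm.~8.22.3):
\[
L_m^{(\nu)}(x)=\frac12\pi^{-1/2}e^{x/2}(-x)^{-\nu/2-1/4}m^{\nu/2-1/4}\exp\!\big(2\sqrt{-xm}\big)\big(1+O(m^{-1/2})\big),
\]
valid for \(x\) in the complex plane cut along \([0,\infty)\). With \(x=-\lambda\) and \(\nu=\beta-1\) the exponents become
\[
-\frac{\nu}{2}-\frac14=\frac14-\frac\beta2,\qquad \frac{\nu}{2}-\frac14=\frac\beta2-\frac34,
\]
and the factor \(e^{x/2}\) becomes \(e^{-\lambda/2}\). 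This is exactly \eqref{eq:basic_exponential_singularity_asymptotic}.

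\textbf{Self-contained route if citation is not enough.} I would run a saddle-point analysis on the Cauchy integral
\[
\frac{1}{2\pi i}\oint(1-z)^{-\beta}\exp\!\left(\frac{\lambda z}{1-z}\right)z^{-m-1}\,dz .
\]
1. Write \(\frac{\lambda z}{1-z}=\frac{\lambda}{1-z}-\lambda\). Set \(w=1-z\), so the phase is \(\frac{\lambda}{w}-m\log(1-w)\approx\frac{\lambda}{w}+mw\).
2. The saddle is at \(w_0=\sqrt{\lambda/m}\), where the phase equals \(2\sqrt{\lambda m}\). Expanding \(-m\log(1-w)\) to second order gives \(mw_0^2/2=\lambda/2\). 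Together with the \(-\lambda\) from step 1, the exponential factor is \(\exp(2\sqrt{\lambda m}-\lambda/2)\).
3. The second derivative of the phase is \(2\lambda/w_0^3\sim2\lambda^{-1/2}m^{3/2}\), so the Gaussian integral contributes \((4\pi\lambda^{-1/2}m^{3/2})^{-1/2}\).
4. The amplitude \(w_0^{-\beta}=(m/\lambda)^{\beta/2}\) then yields the power \(m^{\beta/2-3/4}\lambda^{1/4-\beta/2}/(2\sqrt\pi)\).

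\textbf{Main obstacle.} The delicate part of the saddle-point route is justifying the contour. I would take the circle \(|z|=1-\sqrt{\lambda/m}\), i.e. \(|w|\) small near the real axis. Two tail estimates are then needed:
1. Away from the arc \(|\arg w|\lesssim m^{-1/2+\delta}\), the real part \(\operatorname{Re}(\lambda/w)\) falls off. This gives an exponentially smaller contribution, which must be checked separately on the far arc where \(z\) is bounded away from \(1\).
2. The cubic Taylor remainders must be controlled, which produces the \(1+O(m^{-1/2})\) factor.

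Since the Perron formula is classical and the paper already cites Szeg\H{o}, I expect to present the Laguerre identification plus Perron as the proof, with the saddle-point computation as a consistency check.
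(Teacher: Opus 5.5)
Your proposal is correct and follows the paper's proof essentially verbatim: you identify the coefficient as $L_m^{(\beta-1)}(-\lambda)$ via the Laguerre generating function, then apply the classical fixed-argument asymptotic off the positive real axis, with the same exponent bookkeeping. Your attribution of that asymptotic to Perron's formula (Szeg\H{o}, Thm.~8.22.3) is more precise than the paper's ``Plancherel--Rotach'' label, and your saddle-point sketch is a correct but optional consistency check.
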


\begin{proof}
By the generating function for the generalized Laguerre polynomials,
\[
\sum_{m=0}^\infty L_m^{(\alpha)}(x)z^m
=
(1-z)^{-\alpha-1}
\exp\!\left(
-\frac{xz}{1-z}
\right),
\]
we have, with
\[
\alpha=\beta-1,
\qquad
x=-\lambda,
\]
that
\[
[z^m]\,
(1-z)^{-\beta}
\exp\!\left(
\frac{\lambda z}{1-z}
\right)
=
L_m^{(\beta-1)}(-\lambda).
\]
The Plancherel--Rotach asymptotic formula for generalized Laguerre
polynomials at a fixed negative argument \cite{Szego1975,DLMF} gives, for
\(\lambda>0\),
\[
L_m^{(\alpha)}(-\lambda)
\sim
\frac{1}{2\sqrt\pi}\,
\lambda^{-\frac{\alpha}{2}-\frac14}
m^{\frac{\alpha}{2}-\frac14}
\exp\!\left(
2\sqrt{\lambda m}-\frac{\lambda}{2}
\right).
\]
Substituting \(\alpha=\beta-1\), we obtain
\[
-\frac{\alpha}{2}-\frac14
=
\frac14-\frac\beta2,
\qquad
\frac{\alpha}{2}-\frac14
=
\frac\beta2-\frac34.
\]
Therefore
\[
[z^m]\,
(1-z)^{-\beta}
\exp\!\left(
\frac{\lambda z}{1-z}
\right)
\sim
\frac{1}{2\sqrt\pi}\,
\lambda^{\frac14-\frac\beta2}
m^{\frac\beta2-\frac34}
\exp\!\left(
2\sqrt{\lambda m}-\frac{\lambda}{2}
\right).
\]
This proves the lemma.
\end{proof}

\begin{theorem}[Precise block-energy asymptotics, isotropic nonzero mean case]
\label{thm:isotropic_precise_asymptotics}
Assume \(0<|\sigma|<1\) and \(u\neq0\). Then
\begin{equation}
a_m(u,\sigma)
\sim
C_{d,u,\sigma}\,
m^{\frac d4-\frac34}
\exp\!\left(
\frac{2|u|}{|\sigma|}\sqrt m
\right)
|\sigma|^{2m},
\qquad m\to\infty,
\label{eq:isotropic_precise_block_energy_asymptotic}
\end{equation}
where
\begin{equation}
C_{d,u,\sigma}
=
\frac{2^{-d/2}}{2\sqrt\pi}
\exp\!\left(
-\frac{|u|^2}{2\sigma^2}
\right)
\left(
\frac{|u|^2}{\sigma^2}
\right)^{\frac14-\frac d4}.
\label{eq:isotropic_precise_constant_corrected}
\end{equation}
Consequently,
\begin{equation}
\|\Pi_mK_{u,\sigma}\|_{L^2(\gamma_d)}
\sim
C_{d,u,\sigma}^{1/2}
m^{\frac d8-\frac38}
\exp\!\left(
\frac{|u|}{|\sigma|}\sqrt m
\right)
|\sigma|^m.
\label{eq:isotropic_precise_block_norm_asymptotic}
\end{equation}
\end{theorem}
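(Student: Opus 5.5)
The plan is to reduce the isotropic generating function to the model function of Lemma~\ref{lem:exponential_singularity_coefficients} by rescaling, and then show that the second, weaker singularity contributes only lower-order terms. With \(s=\sigma^2\) and the substitution \(w=sz\), we have \(a_m(u,\sigma)=s^m[w^m]G(w)\), where
\[
G(w)=(1-w)^{-d/2}(1+w)^{-d/2}\exp\!\left(\frac{\lambda w}{1-w}\right),
\qquad
\lambda:=\frac{|u|^2}{s}=\frac{|u|^2}{\sigma^2}>0 .
\]
The dominant singularity is the essential one at \(w=1\). At \(w=-1\) the singularity is only algebraic, so the exponential factor stays bounded near it. Heuristically we freeze \((1+w)^{-d/2}\) at its value \(2^{-d/2}\) at \(w=1\) and apply the lemma with \(\beta=d/2\). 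The lemma gives \(\lambda^{1/4-d/4}m^{d/4-3/4}\exp(2\sqrt{\lambda m}-\lambda/2)/(2\sqrt\pi)\). Since \(2\sqrt{\lambda m}=2|u|\sqrt m/|\sigma|\) and \(e^{-\lambda/2}=e^{-|u|^2/(2\sigma^2)}\), multiplying by \(2^{-d/2}s^m=2^{-d/2}|\sigma|^{2m}\) yields exactly \eqref{eq:isotropic_precise_block_energy_asymptotic} with the constant \eqref{eq:isotropic_precise_constant_corrected}. The block-norm statement \eqref{eq:isotropic_precise_block_norm_asymptotic} then follows by taking square roots.

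To make the freezing rigorous, I will expand around \(w=1\) to a finite order \(J\):
\[
(1+w)^{-d/2}=\sum_{j=0}^{J-1}c_j(1-w)^j+(1-w)^J h_J(w),
\qquad c_0=2^{-d/2}.
\]
Here \(h_J\) is analytic in \(|w|<1\), and its only singularity on \(|w|\le 1\) is at \(w=-1\), of type \((1+w)^{-d/2}\). Hence \(|[w^k]h_J|\le C_J(1+k)^{c}\) with an exponent \(c\) depending only on \(d\), not on \(J\).

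Each term \(c_j(1-w)^{j-d/2}e^{\lambda w/(1-w)}\) is of the lemma's form with \(\beta=d/2-j\). Its coefficient is therefore of size \(m^{-j/2}\) times the main term, so it is \(o\) of the main term for \(j\ge1\). Since \(\beta\) may become nonpositive, I will use that the Plancherel--Rotach asymptotic for \(L_m^{(\alpha)}(-\lambda)\) holds for every real \(\alpha\), which extends the lemma to all real \(\beta\).

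The remainder coefficient is a convolution \(\sum_{k\le m}[w^k]h_J\,b^{(J)}_{m-k}\). Here \(b^{(J)}_n=[w^n](1-w)^{J-d/2}e^{\lambda w/(1-w)}\), which again by the lemma is positive and satisfies \(b^{(J)}_n\asymp n^{d/4-J/2-3/4}e^{2\sqrt{\lambda n}}\). I will compare \(b^{(J)}_{m-k}\) with \(b^{(J)}_m\) using \(\sqrt{m-k}-\sqrt m\le -k/(2\sqrt m)\). For \(k\le m/2\) the sum is then bounded by \(b^{(J)}_m\sum_k k^{c}e^{-k\sqrt{\lambda/m}}\lesssim m^{(c+1)/2}b^{(J)}_m\). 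For \(k>m/2\) there is an \(e^{-c'\sqrt m}\) loss against a polynomial number of polynomially bounded terms. Altogether the remainder is \(O(m^{(c+1)/2-J/2})\) times the main term. Choosing \(J>c+1\) makes it \(o(\text{main})\).

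The step I expect to be the main obstacle is this remainder estimate. The naive one-step decomposition (\(J=1\)) fails, because the polynomial growth of the coefficients of \(h_J\), caused by the singularity at \(w=-1\), costs a factor \(m^{(c+1)/2}\) in the convolution, which exceeds the \(m^{-1/2}\) gain from one power of \((1-w)\). The remedy is to take \(J\) large, which requires both the lemma for nonpositive \(\beta\) and care that the bound on the coefficients of \(h_J\) is uniform in the sense above. If this becomes delicate, the fallback is a direct saddle-point evaluation of the Cauchy integral for \([w^m]G\) on a circle of radius \(1-\sqrt{\lambda/m}\). On that contour the factor \((1+w)^{-d/2}\) is smooth near the saddle at \(w\approx1\), and it is exponentially dominated elsewhere.
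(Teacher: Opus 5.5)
Your proposal is correct, and its skeleton is the same as the paper's. Both rescale by $y=sz$, freeze $(1+y)^{-d/2}$ at its value $2^{-d/2}$ at the essential singularity, and apply Lemma~\ref{lem:exponential_singularity_coefficients} with $\beta=d/2$.

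Where you differ is in justifying the freezing. The paper isolates the point $y=-1$ and quotes the algebraic transfer theorem to get an $O(m^{d/2-1})$ contribution there. It then simply asserts the singular expansion $\Phi(y)\sim 2^{-d/2}(1-y)^{-d/2}\exp(\lambda y/(1-y))$ at $y=1$ and reads off coefficients. This tacitly assumes two things: that the contributions of the two singularities add, and that an analytic factor may be frozen at an essential singularity. Neither is covered by the standard transfer theorems for algebraic--logarithmic singularities, so the paper's argument is a sketch at exactly that point.

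Your order-$J$ expansion of $(1+w)^{-d/2}$ about $w=1$ never separates the singularities. The behaviour at $w=-1$ is absorbed into $h_J$, whose coefficients you bound only crudely by $O(k^{d/2-1})$. The convolution estimate then shows the remainder is $O(m^{(c+1)/2-J/2})$ relative to the main term. The price is that you give up the $(-1)^k$ cancellation coming from $w=-1$. That is why you need $J>c+1$, and hence the Plancherel--Rotach (Perron) asymptotic for $L_m^{(\alpha)}(-\lambda)$ at nonpositive $\alpha$, which does hold for all real $\alpha$. In exchange you get a complete proof.

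One small slip: $b^{(J)}_n$ need not be positive for small $n$; for instance $b^{(J)}_1=\lambda-(J-d/2)$. You only use an upper bound on $|b^{(J)}_n|$ valid for all $n$, together with its asymptotic for large $n$, so nothing breaks.
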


\begin{proof}
Set
\[
s=\sigma^2,
\qquad
\lambda=\frac{|u|^2}{s}
=
\frac{|u|^2}{\sigma^2}.
\]
Since \(u\neq0\), we have \(\lambda>0\). The block-energy generating function is
\[
F(z)
=
(1-s^2z^2)^{-d/2}
\exp\!\left(
\frac{z|u|^2}{1-sz}
\right).
\]
Introduce the rescaled variable
\[
y=sz.
\]
Then
\[
F(z)=\Phi(y),
\qquad y=sz,
\]
where
\[
\Phi(y)
=
(1-y^2)^{-d/2}
\exp\!\left(
\lambda\frac{y}{1-y}
\right).
\]
Therefore, if
\[
\Phi(y)=\sum_{m=0}^{\infty}b_m y^m,
\]
then
\[
F(z)=\sum_{m=0}^{\infty}b_m s^m z^m,
\]
and hence
\[
[z^m]F(z)=s^m[y^m]\Phi(y).
\]

The singularities of \(\Phi\) on the circle \(|y|=1\) occur at \(y=1\) and
\(y=-1\). The point \(y=1\) carries the exponential singularity
\[
\exp\!\left(
\lambda\frac{y}{1-y}
\right),
\]
whereas at \(y=-1\) the exponential factor is analytic and finite:
\[
\exp\!\left(
\lambda\frac{-1}{1-(-1)}
\right)
=
\exp\!\left(-\frac{\lambda}{2}\right).
\]
Thus \(y=-1\) contributes only an algebraic singular contribution coming from
\((1-y^2)^{-d/2}\). Indeed, near \(y=-1\),
\[
(1-y^2)^{-d/2}
=
(1-y)^{-d/2}(1+y)^{-d/2},
\]
where
\[
(1-y)^{-d/2}
\exp\!\left(\lambda\frac{y}{1-y}\right)
\]
is analytic and nonzero at \(y=-1\). By the standard transfer theorem for
algebraic singularities, the contribution from \(y=-1\) is, up to the
oscillatory factor \((-1)^m\), of algebraic order
\[
O\!\left(m^{\frac d2-1}\right).
\]
By contrast, the singularity at \(y=1\) gives the subexponential factor
\[
\exp\!\left(2\sqrt{\lambda m}\right),
\]
as in Lemma~\ref{lem:exponential_singularity_coefficients}. Since
\[
m^{\frac d2-1}
=
o\!\left(
m^{\frac d4-\frac34}
\exp(2\sqrt{\lambda m})
\right),
\]
the contribution from \(y=-1\) is asymptotically negligible. Hence the
dominant coefficient asymptotics are determined by the singularity at \(y=1\).

Near \(y=1\),
\[
(1-y^2)^{-d/2}
=
(1-y)^{-d/2}(1+y)^{-d/2}.
\]
The factor
\[
(1+y)^{-d/2}
\]
is analytic and nonzero at \(y=1\), with value
\[
(1+1)^{-d/2}=2^{-d/2}.
\]
Therefore the dominant singular expansion at \(y=1\) is
\[
\Phi(y)
\sim
2^{-d/2}
(1-y)^{-d/2}
\exp\!\left(
\lambda\frac{y}{1-y}
\right).
\]
By Lemma~\ref{lem:exponential_singularity_coefficients} with
\[
\beta=\frac d2,
\]
we obtain
\[
[y^m]\Phi(y)
\sim
2^{-d/2}
\frac{1}{2\sqrt\pi}
\lambda^{\frac14-\frac d4}
m^{\frac d4-\frac34}
\exp\!\left(
2\sqrt{\lambda m}-\frac{\lambda}{2}
\right).
\]
Since
\[
[z^m]F(z)=s^m[y^m]\Phi(y),
\]
and
\[
s^m=\sigma^{2m}=|\sigma|^{2m},
\qquad
\sqrt{\lambda}
=
\frac{|u|}{|\sigma|},
\]
we obtain
\[
a_m(u,\sigma)
\sim
\frac{2^{-d/2}}{2\sqrt\pi}
\lambda^{\frac14-\frac d4}
m^{\frac d4-\frac34}
\exp\!\left(
2\frac{|u|}{|\sigma|}\sqrt m
-\frac{|u|^2}{2\sigma^2}
\right)
|\sigma|^{2m}.
\]
This is exactly
\eqref{eq:isotropic_precise_block_energy_asymptotic} with
\[
C_{d,u,\sigma}
=
\frac{2^{-d/2}}{2\sqrt\pi}
\exp\!\left(
-\frac{|u|^2}{2\sigma^2}
\right)
\left(
\frac{|u|^2}{\sigma^2}
\right)^{\frac14-\frac d4}.
\]
Taking square roots gives
\eqref{eq:isotropic_precise_block_norm_asymptotic}.
\end{proof}

\begin{remark}[Zero mean case]
If \(u=0\), then the exponential singularity disappears. In this case
\[
F(z)
=
(1-\sigma^4 z^2)^{-d/2}.
\]
Hence only even chaos levels occur:
\[
a_{2n+1}(0,\sigma)=0,
\]
and
\[
a_{2n}(0,\sigma)
=
\sigma^{4n}
\frac{(d/2)_n}{n!}.
\]
Consequently, as \(n\to\infty\),
\[
a_{2n}(0,\sigma)
\sim
\frac{1}{\Gamma(d/2)}
n^{\frac d2-1}
\sigma^{4n}.
\]
Equivalently,
\[
\|\Pi_{2n}K_{0,\sigma}\|_{L^2(\gamma_d)}
\sim
\frac{1}{\Gamma(d/2)^{1/2}}
n^{\frac d4-\frac12}
|\sigma|^{2n},
\qquad
\|\Pi_{2n+1}K_{0,\sigma}\|_{L^2(\gamma_d)}=0.
\]
\end{remark}

\section{Precise isotropic tail asymptotics}

\begin{corollary}[Precise isotropic spectral tail]
\label{cor:isotropic_precise_tail}
Assume \(0<|\sigma|<1\) and \(u\neq0\). Then
\begin{equation}
\|K_{u,\sigma}-\Pi_{\le M}K_{u,\sigma}\|_{L^2(\gamma_d)}^2
\sim
\frac{1}{1-\sigma^2}
a_{M+1}(u,\sigma).
\label{eq:isotropic_precise_tail_squared}
\end{equation}
Consequently,
\begin{equation}
\|K_{u,\sigma}-\Pi_{\le M}K_{u,\sigma}\|_{L^2(\gamma_d)}
\sim
\left(
\frac{C_{d,u,\sigma}}{1-\sigma^2}
\right)^{1/2}
(M+1)^{\frac d8-\frac38}
\exp\!\left(
\frac{|u|}{|\sigma|}\sqrt{M+1}
\right)
|\sigma|^{M+1}.
\label{eq:isotropic_precise_tail_corrected}
\end{equation}
Equivalently, one may replace \(M+1\) by \(M\) in the slowly varying
factors, at the cost of multiplying the leading constant by \(|\sigma|\).
\end{corollary}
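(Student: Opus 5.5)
The plan is to derive the tail asymptotic directly from the block-energy asymptotic of Theorem~\ref{thm:isotropic_precise_asymptotics}, using orthogonality of the chaos spaces. By Pythagoras,
\[
\|K_{u,\sigma}-\Pi_{\le M}K_{u,\sigma}\|_{L^2(\gamma_d)}^2=\sum_{m=M+1}^\infty a_m(u,\sigma).
\]
So it suffices to show that this sum is asymptotic to $(1-\sigma^2)^{-1}a_{M+1}(u,\sigma)$. The mechanism is that the consecutive ratio $a_{m+1}/a_m$ tends to $s=\sigma^2$, so the tail behaves like a geometric series with ratio $s$.

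First I would write $a_m=\phi(m)\,s^m(1+\eta_m)$, where
\[
\phi(m):=C_{d,u,\sigma}\,m^{\frac d4-\frac34}\exp\!\left(2\sqrt{\lambda m}\right),\qquad \lambda=|u|^2/\sigma^2,
\]
and $\eta_m\to0$. The key elementary fact is that $\phi$ varies subexponentially and uniformly so on windows of length $k$. For fixed $k$, $\phi(m+k)/\phi(m)\to1$, because $\sqrt{m+k}-\sqrt m\to0$. For a global domination bound I use $\sqrt{m+k}-\sqrt m\le k/(2\sqrt m)$, which gives
\[
\frac{\phi(m+k)}{\phi(m)}\le C\,(1+k)^{|d/4-3/4|}\,e^{\sqrt\lambda\,k/\sqrt{m}}\qquad(m\ge1).
\]
Hence for any $\delta>0$ with $s e^{\delta}<1$, once $m$ is large enough that $\sqrt\lambda/\sqrt m<\delta/2$, this ratio is at most $C'e^{\delta k}$ uniformly in $k$.

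Next, I would write
\[
\frac{1}{a_{M+1}}\sum_{k\ge0}a_{M+1+k}=\sum_{k\ge0}s^k\,\frac{\phi(M+1+k)}{\phi(M+1)}\,\frac{1+\eta_{M+1+k}}{1+\eta_{M+1}}.
\]
Each summand converges to $s^k$ as $M\to\infty$. For large $M$ the summands are dominated by $2C'(se^{\delta})^k$, which is summable. Dominated convergence for series then gives the limit $\sum_k s^k=(1-s)^{-1}=(1-\sigma^2)^{-1}$, which proves \eqref{eq:isotropic_precise_tail_squared}. The main care needed is this uniform domination, since $\eta_m$ is only known to tend to zero; it is handled by taking $M$ large so that $|\eta_m|\le1/2$ for all $m\ge M$.

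Finally, I would substitute the asymptotic for $a_{M+1}$ from \eqref{eq:isotropic_precise_block_energy_asymptotic} and take square roots, which yields \eqref{eq:isotropic_precise_tail_corrected}. For the closing remark, replacing $M+1$ by $M$ in $m^{d/8-3/8}$ and in $\exp(\frac{|u|}{|\sigma|}\sqrt{m})$ changes these factors only by $1+o(1)$, by the same $\sqrt{M+1}-\sqrt M\to0$ observation. Writing $|\sigma|^{M+1}=|\sigma|\cdot|\sigma|^M$ then accounts for the extra factor $|\sigma|$ in the leading constant.
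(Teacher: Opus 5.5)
Your proof is correct and follows essentially the paper's route: Pythagoras reduces the claim to the tail sum $\sum_{m>M}a_m(u,\sigma)$, which is shown to behave like a geometric series with ratio $\sigma^2$, and then the block asymptotic of Theorem~\ref{thm:isotropic_precise_asymptotics} is substituted. The difference is in how the geometric-tail limit is justified: you use dominated convergence with an explicit subexponential bound on $\phi$, while the paper gets the needed domination more cheaply from the ratio limit $a_{m+1}/a_m\to\sigma^2$ alone, by sandwiching the tail between geometric series with ratios $\sigma^2\pm\varepsilon$; your explicit check of the closing $M+1\to M$ remark is a small addition the paper leaves implicit.
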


\begin{proof}
By Theorem~\ref{thm:isotropic_precise_asymptotics},
\[
a_m(u,\sigma)
\sim
C_{d,u,\sigma}
m^{\frac d4-\frac34}
\exp\!\left(
\frac{2|u|}{|\sigma|}\sqrt m
\right)
\sigma^{2m}.
\]
Therefore
\[
\frac{a_{m+1}(u,\sigma)}{a_m(u,\sigma)}
\to
\sigma^2.
\]
Let
\[
q:=\sigma^2.
\]
Since \(0<q<1\), the ratio limit implies the standard geometric-tail
asymptotic
\[
\sum_{m=M+1}^\infty a_m(u,\sigma)
\sim
\frac{a_{M+1}(u,\sigma)}{1-q}.
\]
For completeness, we recall the argument. Fix \(\varepsilon>0\) such that
\(q+\varepsilon<1\). For all sufficiently large \(m\),
\[
q-\varepsilon
\le
\frac{a_{m+1}}{a_m}
\le
q+\varepsilon.
\]
Hence, for all sufficiently large \(M\),
\[
a_{M+1}
\le
\sum_{m=M+1}^\infty a_m
\le
a_{M+1}
\sum_{j=0}^\infty (q+\varepsilon)^j
=
\frac{a_{M+1}}{1-q-\varepsilon}.
\]
Similarly, for every fixed \(J\ge0\),
\[
\sum_{m=M+1}^{M+1+J}a_m
\ge
a_{M+1}
\sum_{j=0}^J(q-\varepsilon)^j.
\]
Letting first \(M\to\infty\), then \(J\to\infty\), and finally
\(\varepsilon\downarrow0\), we obtain
\[
\sum_{m=M+1}^\infty a_m
\sim
\frac{a_{M+1}}{1-q}.
\]
Since
\[
\|K_{u,\sigma}-\Pi_{\le M}K_{u,\sigma}\|_{L^2(\gamma_d)}^2
=
\sum_{m=M+1}^\infty a_m(u,\sigma),
\]
we get
\[
\|K_{u,\sigma}-\Pi_{\le M}K_{u,\sigma}\|_{L^2(\gamma_d)}^2
\sim
\frac{1}{1-\sigma^2}
a_{M+1}(u,\sigma).
\]
Taking square roots and substituting the asymptotic formula for
\(a_{M+1}(u,\sigma)\) proves
\eqref{eq:isotropic_precise_tail_corrected}.
\end{proof}

\section{Application to kinetic Hermite approximation}

We now interpret the preceding estimates as approximation estimates for
near-Gaussian kinetic distributions, following the general tradition of
Hermite and moment approximations in kinetic theory
\cite{Grad1949Kinetic,Levermore1996,Struchtrup2005,Villani2002}. Let \(f\ge0\) be a kinetic density and
write
\[
g(v):=\frac{f(v)}{w(v)}.
\]
In applications, the Gaussian core may be chosen in several ways.  A natural
choice is to match the mass, mean velocity, and covariance of the measure
\[
g\,d\gamma_d=f\,dv,
\]
whenever these moments are finite and the resulting covariance matrix is
positive definite.  Equivalently, in the moment-matching choice one takes
\[
\rho
=
\int_{\mathbb R^d}g\,d\gamma_d
=
\int_{\mathbb R^d}f(v)\,dv,
\]
\[
u
=
\rho^{-1}
\int_{\mathbb R^d}v\,g(v)\,d\gamma_d(v)
=
\rho^{-1}
\int_{\mathbb R^d}v\,f(v)\,dv,
\]
and
\[
I+S
=
\rho^{-1}
\int_{\mathbb R^d}
(v-u)(v-u)^Tg(v)\,d\gamma_d(v)
=
\rho^{-1}
\int_{\mathbb R^d}
(v-u)(v-u)^Tf(v)\,dv.
\]
With this choice, the residual \(h\) has vanishing mass, first moment, and
covariance defect relative to the chosen Gaussian core.  Alternatively,
\(K_{u,S}\) may be chosen as a local Gaussian approximation or as the minimizer
of a prescribed quadratic or entropic criterion.  The estimates below are
independent of this choice; they quantify the Hermite truncation error once
such a Gaussian core has been selected.  Thus the results in this section
should be read as conditional estimates: they separate the explicitly
computable Gaussian-core contribution from the residual spectral tail.  The
fixed-reference \(L^2(\gamma_d)\) Gaussian-core estimates apply to the
Gaussian part when
\[
\|S\|_{\mathrm{op}}<1.
\]
Assume that \(g\in L^2(\gamma_d)\) admits the decomposition
\begin{equation}
g
=
\rho K_{u,S}+h,
\qquad
\rho>0,
\label{eq:kinetic_gaussian_core_decomposition}
\end{equation}
where \(K_{u,S}\) is a Gaussian density ratio and
\(h\in L^2(\gamma_d)\) is the non-Gaussian remainder.

Let
\[
\Pi_{\le M}:=\sum_{m=0}^{M}\Pi_m.
\]
Then
\[
g-\Pi_{\le M}g
=
\rho\bigl(K_{u,S}-\Pi_{\le M}K_{u,S}\bigr)
+
(I-\Pi_{\le M})h.
\]
Thus the Hermite truncation error splits into a Gaussian-core contribution and
a non-Gaussian residual contribution.

\begin{theorem}[Gaussian-core Hermite approximation]
\label{thm:kinetic_gaussian_core_approximation}
Assume \(S=S^T\), \(I+S>0\), and
\[
\|S\|_{\mathrm{op}}<1.
\]
Let \(r\) satisfy
\[
\|S\|_{\mathrm{op}}^{1/2}<r<1.
\]
Then there exists \(C=C(u,S,r)>0\) such that, for every \(M\ge0\),
\begin{equation}
\|g-\Pi_{\le M}g\|_{L^2(\gamma_d)}
\le
\rho C r^M
+
\|(I-\Pi_{\le M})h\|_{L^2(\gamma_d)}.
\label{eq:kinetic_gaussian_core_error}
\end{equation}
If, in addition, \(h\in\mathcal D(N^\beta)\) for some \(\beta>0\), then
\begin{equation}
\|g-\Pi_{\le M}g\|_{L^2(\gamma_d)}
\le
\rho C r^M
+
(M+1)^{-\beta}
\|N^\beta h\|_{L^2(\gamma_d)}.
\label{eq:kinetic_gaussian_core_error_sobolev}
\end{equation}
\end{theorem}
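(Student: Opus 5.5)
The plan is to combine the linear splitting recorded just before the statement with the Gaussian-core tail bound already established, and then to treat the residual by the Hermite--Sobolev estimate. Applying \(I-\Pi_{\le M}\) to the decomposition \eqref{eq:kinetic_gaussian_core_decomposition}, which is legitimate because \(\Pi_{\le M}\) is a linear bounded operator on \(L^2(\gamma_d)\) and \(K_{u,S},h\in L^2(\gamma_d)\) by Theorem~\ref{thm:anisotropic_exact_L2_norm}, gives
\[
g-\Pi_{\le M}g=\rho\bigl(K_{u,S}-\Pi_{\le M}K_{u,S}\bigr)+(I-\Pi_{\le M})h .
\]
The triangle inequality in \(L^2(\gamma_d)\) together with \(\rho>0\) then reduces \eqref{eq:kinetic_gaussian_core_error} to the single estimate
\[
\|K_{u,S}-\Pi_{\le M}K_{u,S}\|_{L^2(\gamma_d)}\le C r^M,
\qquad M\ge 0 .
\]

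For this estimate I will use the analytic-vector bound from the comparison subsection rather than the root-rate theorem, because it covers all \(M\ge0\) at once and yields an explicit constant. Given \(r\) with \(\|S\|_{\mathrm{op}}^{1/2}<r<1\), I choose \(a>0\) with \(e^{-a}\le r\) and \(e^{2a}\|S\|_{\mathrm{op}}<1\), say \(a=\log(1/r)\). Then \(e^{aN}K_{u,S}=K_{e^au,e^{2a}S}\) by weighted homogeneity, and this function lies in \(L^2(\gamma_d)\) by Theorem~\ref{thm:anisotropic_exact_L2_norm}. The analytic-vector estimate therefore gives
\[
\|K_{u,S}-\Pi_{\le M}K_{u,S}\|_{L^2(\gamma_d)}
\le e^{-a(M+1)}\|K_{e^au,e^{2a}S}\|_{L^2(\gamma_d)}
\le C r^M .
\]
Here \(C\) can be taken to be the explicit value \(\det(I-e^{4a}S^2)^{-1/4}\exp\!\bigl(\tfrac12 e^{2a}u^T(I-e^{2a}S)^{-1}u\bigr)\), which depends only on \((u,S,r)\).

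The main point requiring care is that the comparison subsection assumed \(0<\|S\|_{\mathrm{op}}\), whereas the present statement also allows \(S=0\). In that case the same argument goes through unchanged: \(K_{e^au,0}\) belongs to \(L^2(\gamma_d)\) for every \(a\), and the coefficient identity \(e^{aN}K_{u,0}=K_{e^au,0}\) still holds, so any \(r\in(0,1)\) works. Alternatively, one can read the bound off the generating function \(\exp(z|u|^2)\). I will also make sure that the identity \(e^{aN}K_{u,S}=K_{e^au,e^{2a}S}\) is justified and not merely formal. To do this I compare Hermite coefficients: both sides have coefficients \(e^{a|\alpha|}A_\alpha(S,u)/\alpha!\), and the right-hand side lies in \(L^2(\gamma_d)\). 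By Parseval this gives \(\sum_m e^{2am}\|\Pi_mK_{u,S}\|^2<\infty\), that is, \(K_{u,S}\in\mathcal D(e^{aN})\).

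For \eqref{eq:kinetic_gaussian_core_error_sobolev}, I apply the Hermite--Sobolev estimate derived earlier with \(s=2\beta\). Reading \(\mathcal D(N^\beta)\) as \(\mathcal D(N^{s/2})\) with \(s=2\beta\), that estimate gives
\[
\|(I-\Pi_{\le M})h\|_{L^2(\gamma_d)}\le (M+1)^{-\beta}\|N^\beta h\|_{L^2(\gamma_d)} .
\]
Substituting this bound into \eqref{eq:kinetic_gaussian_core_error} completes the proof.
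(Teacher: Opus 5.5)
Your proof is correct. It differs from the paper's only in how you obtain the Gaussian-core bound $\|K_{u,S}-\Pi_{\le M}K_{u,S}\|_{L^2(\gamma_d)}\le Cr^M$.

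\textbf{The paper's route.} It splits into cases. For $0<\|S\|_{\mathrm{op}}<1$ it cites the tail bound \eqref{eq:anisotropic_rho_tail_bound} of Theorem~\ref{thm:anisotropic_sharp_root_rate}. That constant comes from the Cauchy--Hadamard argument (eventually $a_m\le\rho^{2m}$, with the finitely many early terms absorbed), so it is not explicit. Only for $S=0$, which that theorem excludes, does the paper invoke the analytic-vector estimate with $a=-\log r$.

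\textbf{Your route.} You use the analytic-vector estimate uniformly. This amounts to evaluating the block-energy generating function at the single point $t=1/r$, where the admissibility condition $t^2\|S\|_{\mathrm{op}}<1$ is exactly $\|S\|_{\mathrm{op}}^{1/2}<r$. This buys three things:
\begin{enumerate}
\item no case split is needed, since $S=0$ is covered automatically;
\item the singularity analysis is bypassed entirely;
\item you get the explicit constant $C=\|K_{u/r,S/r^2}\|_{L^2(\gamma_d)}$.
\end{enumerate}
Your Parseval comparison of Hermite coefficients, which shows $K_{u,S}\in\mathcal D(e^{aN})$, also makes rigorous the step that the comparison subsection only writes down ``formally.'' The paper's route buys economy of citation and a direct tie to the sharpness statement.

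One small correction: your stated reason for avoiding Theorem~\ref{thm:anisotropic_sharp_root_rate}, that it does not cover all $M\ge0$ at once, is inaccurate. The bound \eqref{eq:anisotropic_rho_tail_bound} is stated for every $M\ge0$. The real advantages are the explicit constant and the uniform treatment of $S=0$.

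Your treatment of the residual, via the Hermite--Sobolev estimate with $s=2\beta$, coincides with the paper's computation.
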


\begin{proof}
By the triangle inequality,
\[
\|g-\Pi_{\le M}g\|_{L^2(\gamma_d)}
\le
\rho
\|K_{u,S}-\Pi_{\le M}K_{u,S}\|_{L^2(\gamma_d)}
+
\|(I-\Pi_{\le M})h\|_{L^2(\gamma_d)}.
\]
The Gaussian-core term is bounded by Theorem~\ref{thm:anisotropic_sharp_root_rate}
when \(0<\|S\|_{\mathrm{op}}<1\).

If \(S=0\), then \(K_{u,0}\) is an analytic vector for \(N\) of arbitrary
positive radius. Indeed,
\[
e^{aN}K_{u,0}=K_{e^a u,0}\in L^2(\gamma_d)
\]
for every \(a>0\). Hence, for every \(0<r<1\), choosing \(a=-\log r\) in the
analytic-vector estimate gives
\[
\|K_{u,0}-\Pi_{\le M}K_{u,0}\|_{L^2(\gamma_d)}
\le
C(u,r)r^M.
\]
Thus, in all cases \(\|S\|_{\mathrm{op}}<1\), the Gaussian-core term is bounded
by \(Cr^M\). This proves \eqref{eq:kinetic_gaussian_core_error}.

If \(h\in\mathcal D(N^\beta)\), then
\[
\begin{aligned}
\|(I-\Pi_{\le M})h\|_{L^2(\gamma_d)}^2
&=
\sum_{m=M+1}^{\infty}
\|\Pi_mh\|_{L^2(\gamma_d)}^2 \\
&\le
(M+1)^{-2\beta}
\sum_{m=M+1}^{\infty}
m^{2\beta}\|\Pi_mh\|_{L^2(\gamma_d)}^2 \\
&\le
(M+1)^{-2\beta}
\|N^\beta h\|_{L^2(\gamma_d)}^2.
\end{aligned}
\]
Taking square roots gives
\[
\|(I-\Pi_{\le M})h\|_{L^2(\gamma_d)}
\le
(M+1)^{-\beta}\|N^\beta h\|_{L^2(\gamma_d)}.
\]
Substituting this into \eqref{eq:kinetic_gaussian_core_error} proves
\eqref{eq:kinetic_gaussian_core_error_sobolev}.
\end{proof}

The preceding theorem is an a priori upper bound. It shows that the Gaussian
core contributes an exponentially small Hermite tail, whereas the residual
\(h\) contributes whatever spectral tail remains after subtracting the
Gaussian core. The sharp rate of the full approximation is obtained when the
residual tail is spectrally smaller than the Gaussian-core tail.

\begin{theorem}[Gaussian-core dominance of the truncation rate]
\label{thm:kinetic_gaussian_core_dominance}
Assume
\[
g=\rho K_{u,S}+h,
\qquad
\rho>0,
\]
with \(S=S^T\), \(I+S>0\), and
\[
0<\|S\|_{\mathrm{op}}<1.
\]
Suppose that the residual satisfies
\begin{equation}
\limsup_{M\to\infty}
\|(I-\Pi_{\le M})h\|_{L^2(\gamma_d)}^{1/M}
<
\|S\|_{\mathrm{op}}^{1/2}.
\label{eq:residual_faster_than_gaussian_core}
\end{equation}
Then the full truncation error has the same sharp root rate as the Gaussian
core:
\begin{equation}
\limsup_{M\to\infty}
\|g-\Pi_{\le M}g\|_{L^2(\gamma_d)}^{1/M}
=
\|S\|_{\mathrm{op}}^{1/2}.
\label{eq:kinetic_full_error_sharp_root_rate}
\end{equation}
\end{theorem}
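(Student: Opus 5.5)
We prove the equality by establishing the upper and lower bounds separately. Both rest on the decomposition
\[
g-\Pi_{\le M}g=\rho R_M^{\mathrm{sp}}(u,S)+(I-\Pi_{\le M})h
\]
and on Theorem~\ref{thm:anisotropic_sharp_root_rate}, which identifies the root rate of \(R_M^{\mathrm{sp}}(u,S)\) as \(\|S\|_{\mathrm{op}}^{1/2}\). Write \(\kappa:=\|S\|_{\mathrm{op}}^{1/2}\) and \(\eta:=\limsup_M\|(I-\Pi_{\le M})h\|^{1/M}<\kappa\).

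\emph{Upper bound.} The triangle inequality gives
\[
\|g-\Pi_{\le M}g\|\le \rho\|R_M^{\mathrm{sp}}\|+\|(I-\Pi_{\le M})h\|.
\]
We then use the elementary fact that \(\limsup(x_M+y_M)^{1/M}\le\max(\limsup x_M^{1/M},\limsup y_M^{1/M})\) for nonnegative sequences. This holds because \(x_M+y_M\le 2\max(x_M,y_M)\) and \(2^{1/M}\to1\). Also \(\rho^{1/M}\to1\). Together these give a limsup of at most \(\max(\kappa,\eta)=\kappa\). Alternatively, fix \(r\in(\kappa,1)\), invoke \eqref{eq:anisotropic_rho_tail_bound}, and bound the residual term by \(r^M\) for large \(M\).

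\emph{Lower bound.} The reverse triangle inequality gives
\[
\|g-\Pi_{\le M}g\|\ge \rho\|R_M^{\mathrm{sp}}\|-\|(I-\Pi_{\le M})h\|.
\]
Fix \(\varepsilon>0\) with \(\eta+2\varepsilon<\kappa-\varepsilon\). For all large \(M\) we have \(\|(I-\Pi_{\le M})h\|\le(\eta+\varepsilon)^M\). By \eqref{eq:anisotropic_tail_root_rate}, there are infinitely many \(M\) with \(\|R_M^{\mathrm{sp}}\|\ge(\kappa-\varepsilon)^M\). Along those \(M\), the error is at least
\[
\rho(\kappa-\varepsilon)^M-(\eta+\varepsilon)^M=(\kappa-\varepsilon)^M\Bigl(\rho-\bigl(\tfrac{\eta+\varepsilon}{\kappa-\varepsilon}\bigr)^M\Bigr).
\]
The bracket tends to \(\rho>0\), so it eventually exceeds \(\rho/2\). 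Taking \(M\)-th roots along this subsequence gives a limsup of at least \(\kappa-\varepsilon\). Letting \(\varepsilon\downarrow0\) finishes the argument.

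The only delicate point is the lower bound. The residual tail must be dominated by the core tail along the same subsequence on which the core is large. A limsup lower bound on the core alone would not suffice if the residual could cancel it. The strict inequality \eqref{eq:residual_faster_than_gaussian_core} resolves this, because it gives an eventual bound on the residual for \emph{all} large \(M\), not merely along a subsequence. If \(\eta=0\) the same argument applies with \(\eta+\varepsilon\) replaced by \(\varepsilon\). No orthogonality between \(h\) and \(K_{u,S}\) is needed; only the triangle inequalities are used.
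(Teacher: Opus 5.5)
Your proof is correct and follows the same route as the paper. The upper bound comes from the triangle inequality, and the lower bound from the reverse triangle inequality along a subsequence where $\|R_M^{\mathrm{sp}}(u,S)\|^{1/M}$ approaches $\|S\|_{\mathrm{op}}^{1/2}$. The paper simply asserts that the residual is negligible along that subsequence; your version makes this quantitative via the eventual bound $\|(I-\Pi_{\le M})h\|\le(\eta+\varepsilon)^M$, which holds for all large $M$.
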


\begin{proof}
Set
\[
G_M:=K_{u,S}-\Pi_{\le M}K_{u,S},
\qquad
H_M:=(I-\Pi_{\le M})h.
\]
Then
\[
g-\Pi_{\le M}g=\rho G_M+H_M.
\]
By Theorem~\ref{thm:anisotropic_sharp_root_rate},
\[
\limsup_{M\to\infty}
\|G_M\|_{L^2(\gamma_d)}^{1/M}
=
\|S\|_{\mathrm{op}}^{1/2}.
\]
The upper bound in \eqref{eq:kinetic_full_error_sharp_root_rate} follows from
the triangle inequality and the assumption
\eqref{eq:residual_faster_than_gaussian_core}.

For the lower bound, use the reverse triangle inequality:
\[
\|\rho G_M+H_M\|_{L^2(\gamma_d)}
\ge
\rho\|G_M\|_{L^2(\gamma_d)}
-
\|H_M\|_{L^2(\gamma_d)}.
\]
Since \(H_M\) has strictly smaller root rate than \(G_M\), it is negligible
along a subsequence realizing the limsup of \(\|G_M\|^{1/M}\). Therefore
\[
\limsup_{M\to\infty}
\|\rho G_M+H_M\|_{L^2(\gamma_d)}^{1/M}
\ge
\|S\|_{\mathrm{op}}^{1/2}.
\]
The matching upper and lower bounds prove
\eqref{eq:kinetic_full_error_sharp_root_rate}.
\end{proof}

In particular, if the non-Gaussian residual has a faster Hermite spectral
decay than the Gaussian core, then the covariance defect of the Gaussian core
alone determines the asymptotic truncation rate. Thus
\[
\|S\|_{\mathrm{op}}^{1/2}
\]
is not merely an upper-bound rate for the Gaussian approximation component; it
is the actual sharp geometric root rate of the full kinetic approximation
whenever the residual is spectrally negligible.

In the isotropic heating case, the previous precise asymptotics give a sharper
result.

\begin{corollary}[Precise isotropic kinetic tail]
\label{cor:kinetic_precise_isotropic_tail}
Assume
\[
g=\rho K_{u,\sigma}+h,
\qquad
\rho>0,
\qquad
0<|\sigma|<1,
\qquad
u\neq0.
\]
Suppose that
\begin{equation}
\|(I-\Pi_{\le M})h\|_{L^2(\gamma_d)}
=
o\!\left(
(M+1)^{\frac d8-\frac38}
\exp\!\left(
\frac{|u|}{|\sigma|}\sqrt{M+1}
\right)
|\sigma|^{M+1}
\right).
\label{eq:isotropic_residual_negligible_condition}
\end{equation}
Then
\begin{equation}
\|g-\Pi_{\le M}g\|_{L^2(\gamma_d)}
\sim
\rho
\left(
\frac{C_{d,u,\sigma}}{1-\sigma^2}
\right)^{1/2}
(M+1)^{\frac d8-\frac38}
\exp\!\left(
\frac{|u|}{|\sigma|}\sqrt{M+1}
\right)
|\sigma|^{M+1}.
\label{eq:kinetic_precise_isotropic_tail}
\end{equation}
\end{corollary}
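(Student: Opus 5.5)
The plan is to follow the splitting already used in Theorem~\ref{thm:kinetic_gaussian_core_dominance}, replacing the root-rate comparison by an asymptotic-equivalence comparison. Write
\[
g-\Pi_{\le M}g=\rho G_M+H_M,
\qquad
G_M:=K_{u,\sigma}-\Pi_{\le M}K_{u,\sigma},
\qquad
H_M:=(I-\Pi_{\le M})h,
\]
and set
\[
\Psi_M:=(M+1)^{\frac d8-\frac38}\exp\!\left(\frac{|u|}{|\sigma|}\sqrt{M+1}\right)|\sigma|^{M+1}.
\]
Corollary~\ref{cor:isotropic_precise_tail} applies directly, because the hypotheses \(0<|\sigma|<1\) and \(u\neq0\) are exactly those of that corollary. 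It gives
\[
\|G_M\|_{L^2(\gamma_d)}\sim\Bigl(\tfrac{C_{d,u,\sigma}}{1-\sigma^2}\Bigr)^{1/2}\Psi_M .
\]
The leading constant is strictly positive: \(C_{d,u,\sigma}>0\) by its explicit formula \eqref{eq:isotropic_precise_constant_corrected}, and \(1-\sigma^2>0\). Hence \(\|G_M\|\asymp\Psi_M\) for large \(M\).

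Next I would apply the triangle and reverse triangle inequalities:
\[
\rho\|G_M\|-\|H_M\|\le\|g-\Pi_{\le M}g\|\le\rho\|G_M\|+\|H_M\|.
\]
Dividing through by \(\rho\|G_M\|\) gives
\[
\frac{\|g-\Pi_{\le M}g\|}{\rho\|G_M\|}=1+O\!\left(\frac{\|H_M\|}{\rho\|G_M\|}\right).
\]
By hypothesis \eqref{eq:isotropic_residual_negligible_condition}, \(\|H_M\|=o(\Psi_M)\). Since \(\|G_M\|\asymp\Psi_M\) and \(\rho>0\), the error ratio tends to \(0\). Therefore \(\|g-\Pi_{\le M}g\|\sim\rho\|G_M\|\). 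Substituting the asymptotic for \(\|G_M\|\) yields \eqref{eq:kinetic_precise_isotropic_tail}.

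There is essentially no obstacle. The only point needing care is that the comparison scale in the residual hypothesis is \(\Psi_M\) itself rather than \(\|G_M\|\). This is harmless, because Corollary~\ref{cor:isotropic_precise_tail} shows the two differ asymptotically by a nonzero constant factor, so \(o(\Psi_M)\) and \(o(\|G_M\|)\) coincide. I would record this explicitly and note that the orthogonality of the chaos blocks is not needed: only the triangle inequality in \(L^2(\gamma_d)\) is used.
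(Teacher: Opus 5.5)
Your proposal is correct and follows the paper's own proof. You split $g-\Pi_{\le M}g=\rho G_M+H_M$, invoke Corollary~\ref{cor:isotropic_precise_tail} for the Gaussian-core tail, and use the triangle and reverse triangle inequalities with $\|H_M\|=o(\Psi_M)$; your explicit check that $C_{d,u,\sigma}>0$, so that $o(\Psi_M)$ and $o(\|G_M\|)$ coincide, spells out the step the paper leaves implicit.
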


\begin{proof}
By Corollary~\ref{cor:isotropic_precise_tail},
\[
\|K_{u,\sigma}-\Pi_{\le M}K_{u,\sigma}\|_{L^2(\gamma_d)}
\sim
\left(
\frac{C_{d,u,\sigma}}{1-\sigma^2}
\right)^{1/2}
(M+1)^{\frac d8-\frac38}
\exp\!\left(
\frac{|u|}{|\sigma|}\sqrt{M+1}
\right)
|\sigma|^{M+1}.
\]
The assumption \eqref{eq:isotropic_residual_negligible_condition} says exactly
that the residual tail is smaller than this Gaussian-core tail. Hence
\[
g-\Pi_{\le M}g
=
\rho\bigl(K_{u,\sigma}-\Pi_{\le M}K_{u,\sigma}\bigr)
+
(I-\Pi_{\le M})h
\]
has the same norm asymptotic as its Gaussian-core component. This proves
\eqref{eq:kinetic_precise_isotropic_tail}.
\end{proof}

\subsection{Interpretation}

The approximation error separates into
\[
\text{Gaussian-core error}
+
\text{non-Gaussian residual error}.
\]
The Gaussian-core error is not governed by a generic Sobolev regularity index.
It is governed exactly by the covariance defect of the Gaussian core relative
to the reference Gaussian. In the anisotropic case the sharp root rate is
\[
\|S\|_{\mathrm{op}}^{1/2}.
\]
In the isotropic heating parametrization \(S=\sigma^2 I\), this becomes
\[
|\sigma|.
\]

Thus fixed-reference Hermite approximation deteriorates precisely as the
Gaussian covariance approaches the \(L^2(\gamma_d)\)-critical boundary
\[
\|S\|_{\mathrm{op}}\uparrow1.
\]
Equivalently, in the isotropic notation, deterioration occurs as
\[
|\sigma|\uparrow1.
\]
The exact block-energy generating function identifies this deterioration at
the level of Hermite blocks, the sharp root-rate theorem identifies the
optimal exponential scale, and the precise isotropic asymptotics give the
leading-order tail when the Gaussian core dominates the residual.

Therefore, for near-Gaussian kinetic distributions, the Hermite truncation
error is controlled by two distinct mechanisms: the covariance defect of the
Gaussian core and the remaining Hermite regularity of the non-Gaussian
residual. When the residual is spectrally negligible, the covariance defect
alone determines the full asymptotic truncation rate.

The parabolic control principle of
Section~\ref{subsec:parabolic_control_hermite_tails} gives a complementary
interpretation of the residual mechanism.  If a localized Gaussian core or a
non-Gaussian residual family evolves by heat flow in an external parameter,
then its Hermite block energies and truncation tails remain controlled by
parameter-space subsolution identities, even when no usable closed coefficient
generating transform is available.  Thus the exact Gaussian formulas identify
the sharp closed-form contribution of the Gaussian core, while the heat-flow
energy identities provide a robust control mechanism for heat-evolved
non-Gaussian perturbations.

\section{Conclusion}

We have shown that the classical Hermite generating function has a concrete
Gaussian interpretation: it is the density ratio of two unit-temperature
Gaussian densities, one with mean \(u\) and the other the centered reference.
Applying the heat semigroup in the mean variable gives the normalized
Maxwellian ratio with the same mean and temperature \(1+\tau\).  Thus the heat
time in the mean parameter is identified with the temperature increment
relative to the fixed unit-temperature reference.

We also isolated an energy-level consequence of this parameter-space heat
flow.  Whenever an \(L^2(\gamma_d)\)-valued family satisfies the heat equation
in the mean parameter, its Hermite block energies and truncation tails satisfy
parabolic subsolution identities.  This part of the argument does not require
a closed formula for the individual Hermite coefficients.

For Gaussian density ratios with general covariance, we derived the Hermite
coefficient generating function and its weighted homogeneity in the mean and
covariance-defect parameters.  This leads to the Ornstein--Uhlenbeck covariance
of the Gaussian ratio family, the exact \(L^2(\gamma_d)\) norm formula, and the
exact generating function for total-degree Hermite block energies.  From this
generating function we obtained the sharp Hermite truncation root rate, equal
to the square root of the largest absolute covariance defect relative to the
reference Gaussian.

In the isotropic heating case, the block-energy generating function reduces to
a Laguerre-type generating function.  This gives precise block-energy and tail
asymptotics in the scalar setting.  Finally, for near-Gaussian kinetic
densities, the Hermite truncation error separates into a Gaussian-core part and
a residual part.  When the residual tail is smaller than the Gaussian-core
tail, the covariance defect of the Gaussian core determines the leading
asymptotic truncation rate.

\backmatter

\bmhead{Acknowledgements}
This work was partially supported by the KIST Institutional Program (26E0191).

\bmhead{Conflict of interest}
The author declares that they have no competing interests.

\bmhead{Data availability}
Data sharing is not applicable to this article as no datasets were generated or analysed during the current study.

\bigskip

\bibliography{references}

\end{document}